\renewcommand{\orcid}[1]{\href{https://orcid.org/#1}{\textcolor[HTML]{A6CE39}{orcid.org/#1}}}
\setlist[enumerate]{leftmargin=.5in}
\setlist[itemize]{leftmargin=.5in}
\crefname{hypothesis}{Hypothesis}{Hypotheses}
\title{Towards provable energy-stable overset grid methods using sub-cell summation-by-parts operators
\thanks{
\monthyeardate\today 
\corresponding{Jan Glaubitz} 
}}
\author{
Jan Glaubitz\thanks{
Department of Mathematics, Link\"oping University, 58183 Linköping, Sweden
(\email{jan.glaubitz@liu.se}, \orcid{0000-0002-3434-5563};  \email{andrew.ross.winters@liu.se}, \orcid{0000-0002-5902-1522}; \email{jan.nordstrom@liu.se}, \orcid{0000-0002-7972-6183})
}
\and 
Joshua Lampert\thanks{
Department of Mathematics, University of Hamburg, Bundesstraße 55, 20146 Hamburg, Germany
(\email{joshua.lampert@uni-hamburg.de}, \orcid{0009-0007-0971-6709})
}
\and 
Andrew R.\ Winters\footnotemark[2] 
\and
Jan Nordstr\"om\footnotemark[2] \thanks{
Department of Mathematics and Applied Mathematics, University of Johannesburg, P.O.\ Box 524, Auckland Park 2006, Johannesburg, South Africa
}
}
\newcommand{\JL}[1]{{\color{blue}#1}}
\DeclareMathOperator{\diag}{diag}
\newcommand{\scp}[2]{\left\langle{#1, #2}\right\rangle}
\renewcommand{\d}{\mathrm{d}}
\newcommand{\intd}{\, \mathrm{d}}
\newcommand{\R}{\mathbb{R}}
\newcommand{\fnum}{f^{\operatorname{num}}}
\begin{document}

\maketitle

\begin{abstract}
Overset grid methods handle complex geometries by overlapping simpler, geometry-fitted grids to cover the original, more complex domain.
However, ensuring their stability---particularly at high orders---remains a practical and theoretical challenge.
In this work, we address this gap by developing a discrete counterpart to the recent well-posedness analysis of Kopriva, Gassner, and Nordstr\"om for continuous overset domain initial-boundary-value problems.
To this end, we introduce the novel concept of sub-cell summation-by-parts (SBP) operators.
These discrete derivative operators mimic integration by parts at a sub-cell level.
By exploiting this sub-cell SBP property, we develop provably conservative and energy-stable overset grid methods, thereby resolving longstanding stability issues in the field.
\end{abstract}

\begin{keywords}
	Overset grid problems, summation-by-parts operators, hyperbolic conservation laws, linear advection equations, conservation, energy stability
\end{keywords}

\begin{AMS}
	65N35, 
	65N12, 
    65D12, 
    65D25 
\end{AMS}

\begin{Code}
    \url{https://github.com/JoshuaLampert/2025_overset_grid_sub-cell}
\end{Code}

\begin{DOI}
	Not yet assigned
\end{DOI}

\section{Introduction}
\label{sec:introduction}


Numerical simulation of many physical phenomena in science and engineering depends on solving hyperbolic conservation laws over complex geometries.
In such contexts, the cost of generating and refining grids can become significant.
To mitigate this challenge, several approaches have been developed to simplify or eliminate the mesh generation process.
Prominent members among these approaches are mesh-free radial basis function (RBF) methods built on point clouds \cite{fornberg2015primer}, cut finite element methods (CutFEM) that operate on unfitted meshes \cite{burman2015cutfem}, and overset grid methods \cite{thompson1998handbook}---which are the focus of this work.

Overset grid methods, also known as Chimera methods, have been employed for over four decades to facilitate simulations involving complex geometries by overlapping structured and body-fitted grids.
These methods are especially beneficial for problems involving bodies in relative motion, such as helicopters or wind turbines, where single-grid approaches require mesh movement or re-meshing at each time step.
Overset grids allow for independently moving meshes, simplifying grid management.
An additional advantage is the use of a fitted grid near the body of interest, combined with a Cartesian mesh in the far field, which yields improved computational efficiency.

A typical overset grid setup overlays a high-resolution, body-fitted mesh onto a coarser background mesh. 
The solutions on overlapping grids are coupled through interface conditions applied at the overset boundaries, and data is often exchanged in overlap regions to ensure consistency across grids \cite{nordstrom2014flexible,frenander2016stable,brazell2016overset,frenander2017stable}.
The overset grid methodology has matured into a well-established framework, supported by a dedicated biennial conference series \cite{oversetSymposium2024}, and implemented in numerous software packages such as \cite{henshaw2002overture, buning2004cfd, chan2018chimera}.
Overset grids have been successfully integrated into a broad spectrum of discretization frameworks, including finite difference (FD) methods \cite{steger1987use}; finite element (FE) methods, continuous \cite{hansbo2003finite}, discontinuous \cite{galbraith2014discontinuous,brazell2016overset}, and hybridizable \cite{kauffman2017overset}; finite volume (FV) methods \cite{brown1997overture}; and spectral methods \cite{kopriva1989computation,merrill2016spectrally}.
Applications span fields such as aerodynamics \cite{steger1987use}, solid mechanics \cite{appelo2012numerical}, meteorology \cite{kageyama2004yin}, and electromagnetics \cite{henshaw2006high}.

Despite their versatility and widespread use, ensuring the stability of overset grid methods for advection-dominated problems remains a significant challenge.
In particular, no fully multi-dimensional, provably stable overset grid scheme is currently available for systems of hyperbolic conservation laws.
Existing stability analyses have primarily focused on scalar or linearized systems in one spatial dimension, employing energy methods in conjunction with summation-by-parts (SBP) FD discretizations \cite{bodony2011provably,reichert2011stable,reichert2012energy,sharan2018time}.
However, these previous efforts rely on the assumption that the coefficient matrices are simultaneously diagonalizable, which limits their applicability in more general settings.

One key challenge in establishing a general stability proof is that the original linear initial-boundary-value problem (IBVP) must be well-posed.
Without well-posedness, further numerical development is futile.
The steps used to demonstrate well-posedness can often be directly followed to prove stability for numerical schemes \cite{nordstrom2017roadmap}.
The recent works \cite{kopriva2022theoretical,kopriva2022theoretical2} were the first to establish well-posedness of the overset grid problem for linear hyperbolic conservation laws and entropy stability of the overset grid problem for non-linear hyperbolic conservation laws, respectively.
The proofs in \cite{kopriva2022theoretical,kopriva2022theoretical2} rely on two main tools:
(1) Performing integration by parts (IBP) both \emph{on the whole domain and sub-domains}, and (2) enforcing boundary and interface conditions weakly using penalty terms \cite{nordstrom2014flexible,frenander2016stable,brazell2016overset,frenander2017stable,nordstrom2021neural}.
Here, we focus on developing a discrete analog to the first main tool:
IBP on sub-cells.

\subsection*{Our contribution}

We introduce the novel concept of \emph{sub-cell SBP operators}.
Classical SBP operators \cite{svard2014review,fernandez2014review,chen2020review,glaubitz2022summation} discretely mimic IBP on the entire cell on which they are defined. These operators yield provably conservative and energy-stable numerical methods for energy-bounded \emph{non-overset domain} IBVPs when combined with weakly enforced boundary and interface conditions.
In contrast, the proposed sub-cell SBP operators mimic IBP on the whole cell \emph{as well as} on pre-defined sub-cells that partition the domain into two non-overlapping pieces.
This extension allows us to discretely mimic the theoretical framework of \cite{kopriva2022theoretical,kopriva2022theoretical2} and develop provably conservative and energy-stable methods for energy-bounded \emph{one-dimensional overset domain} IBVPs.

We further establish the existence of sub-cell SBP operators and provide a constructive procedure for their design.
In particular, we show that a sub-cell SBP operator on a generic cell $\Omega_{\rm ref} = [\omega_L,\omega_R]$ with sub-cells $\Omega_{L} = [\omega_L,\omega_M]$ and $\Omega_{R} = [\omega_M,\omega_R]$ exists if and only if traditional SBP operators exist on both $\Omega_{L}$ and $\Omega_{R}$. 
While the present work focuses on one-dimensional hyperbolic conservation laws as a proof of concept, it lays the groundwork for rethinking the design of provably conservative and energy-/entropy-stable overset grid methods in more general settings. 
For clarity, our theoretical analysis is restricted to the scalar linear advection equation.
Nevertheless, our computational experiments demonstrate that the sub-cell SBP overset methodology also yields conservative and entropy-stable schemes for more complex problems, including the inviscid Burgers', linear Maxwell's, and compressible Euler equations.

\subsection*{Outline}

In \Cref{sec:continuous}, we establish the continuous overset grid problem for hyperbolic conservation laws in one dimension and derive some of its properties, such as conservation and energy stability.
Notably, the continuous analysis in \Cref{sec:continuous} relies on performing IBP on both entire cells and pre-defined sub-cells.
To mimic the continuous analysis discretely, we introduce the concept of sub-cell SBP operators in \Cref{sec:operators}.
In \Cref{sec:application}, we then formulate sub-cell SBP semi-discretizations for one-dimensional hyperbolic conservation laws, and prove that these are conservative and energy-stable for the linear advection equation.
Furthermore, in \Cref{sec:existence}, we establish the existence of the proposed sub-cell SBP operators and provide a procedure for their construction. 
In \Cref{sec:tests}, we demonstrate the computational performance of the sub-cell SBP-based overset grid methods for a series of experiments, including the scalar linear advection equation, inviscid Burgers' equation, linear system of Maxwell's equations, and compressible Euler equations.
Finally, we provide concluding thoughts, open problems, and an outlook on future research in \Cref{sec:summary}.
 
\section{The continuous problem}
\label{sec:continuous}

We begin by setting up the continuous problem and deriving some of its properties, which the numerical method will later mimic using the proposed sub-cell SBP operators.

\subsection{The problem in one space dimension}
\label{sub:cont_problem}

Consider the following one-dimensional initial-boundary-value problem (IBVP) on $\Omega = [a,d]$:
\begin{equation}\label{eq:IBVP_general}
\begin{aligned}
    \boldsymbol{w}_t + \boldsymbol{f}(\boldsymbol{w})_x
    		& = 0,\quad && x \in (a,d), \ t > 0 \\
    \boldsymbol{w}(x,0)
    		& = \boldsymbol{w}_0(x), \quad && x \in [a,d], \\
	\boldsymbol{w}(a,t)
    		& = \boldsymbol{g}_L(t), \quad && t>0, \\
	\boldsymbol{w}(d,t)
    		& = \boldsymbol{g}_R(t), \quad && t>0,
\end{aligned}
\end{equation}
where $\boldsymbol{w} = [w_1,\dots,w_{D}]^T$ are the unknown conserved variables, $\boldsymbol{f} = [f_1,\dots,f_D]^T$ is a given flux function, $\boldsymbol{w}_0 = [(w_0)_1, \dots, (w_0)_D]^T$ is the initial data, and $\boldsymbol{g}_L = [(g_L)_1, \dots, (g_L)_D]^T, \boldsymbol{g}_R = [(g_R)_1, \dots, (g_R)_D]^T$ are inflow boundary data at the left and right boundary of the domain, respectively.
If there is no inflow boundary condition at one of the boundaries, say at $d$, then $\boldsymbol{g}_R(t) = \boldsymbol{w}(d,t)$---meaning that we don't make any restrictions on the solution there.
The scalar overset domain problem seeks the solution to \cref{eq:IBVP_general} by finding the solutions $u$ and $v$ on two domains $\Omega_u = [a,c]$ and $\Omega_v = [b,d]$:
\begin{equation}\label{eq:LR_IBVP_general}
	\text{(L-IBVP)} \left\{\begin{aligned}
		\boldsymbol{u}_t + \boldsymbol{f}(\boldsymbol{u})_x
			& = 0, \quad x \in (a,c), \\
		\boldsymbol{u}(x,0)
			& = \boldsymbol{w}_0(x), \\
		\boldsymbol{u}(a,t)
			& = \boldsymbol{g}_L(t), \\
		\boldsymbol{u}(c,t)
			& = \boldsymbol{v}(c,t),
	\end{aligned} \right. \qquad
	\text{(R-IBVP)} \left\{\begin{aligned}
		\boldsymbol{v}_t + \boldsymbol{f}(\boldsymbol{v})_x
			& = 0, \quad x \in (b,d), \\
		\boldsymbol{v}(x,0)
			& = \boldsymbol{w}_0(x), \\
		\boldsymbol{v}(b,t)
			& = \boldsymbol{u}(b,t), \\
		\boldsymbol{v}(d,t)
			& = \boldsymbol{g}_R(t), \\
	\end{aligned} \right.
\end{equation}
with $a < b < c < d$.
\Cref{fig:overlap_1d} illustrates the set-up of the overset domain problem \cref{eq:LR_IBVP_general}.

\begin{figure}[tb]
	\centering
	\includegraphics[width=0.6\textwidth]{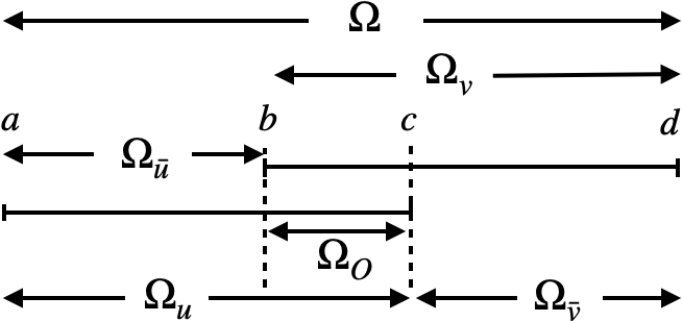}
  	\caption{
  	Overset domain definitions in one spatial dimension.
	The figure is taken from \cite{kopriva2022theoretical}.
  	}
  	\label{fig:overlap_1d}
\end{figure}

While the proposed sub-cell SBP methodology applies to the general overset domain problem \cref{eq:LR_IBVP_general}, our analysis---proving conservation and energy stability---focuses on the scalar linear advection equation with $f(w) = \alpha w$ and $\alpha \in \R$.
Without loss of generality, we assume $\alpha > 0$.
In this case, \cref{eq:IBVP_general} becomes
\begin{equation}\label{eq:IBVP_scalar}
\begin{aligned}
    w_t + \alpha w_x
    		& = 0,\quad && x \in (a,d), \ t > 0 \\
    w(x,0)
    		& = w_0(x), \quad && x \in [a,d], \\
	w(a,t)
    		& = g_L(t), \quad && t>0.
\end{aligned}
\end{equation}
Furthermore, the overset domain problem \cref{eq:LR_IBVP_general} reduces to
\begin{equation}\label{eq:LR_IBVP_scalar}
	\text{(L-IBVP)} \left\{\begin{aligned}
		u_t + \alpha u_x
			& = 0, \quad x \in (a,c), \\
		u(x,0)
			& = w_0(x), \\
		u(a,t)
			& = g_L(t),
	\end{aligned} \right. \qquad
	\text{(R-IBVP)} \left\{\begin{aligned}
		v_t + \alpha v_x
			& = 0, \quad x \in (b,d), \\
		v(x,0)
			& = w_0(x), \\
		v(b,t)
			& = u(b,t).
	\end{aligned} \right.
\end{equation}
Notably, the original IBVP \cref{eq:IBVP_scalar} for $w$ is known to be well-posed and energy-stable \cite{kreiss2004initial}.
Below, we review some of the results from \cite{kopriva2022theoretical} that established similar fundamental properties of the solutions $u$ and $v$ for the overset domain problem \cref{eq:LR_IBVP_scalar}, including conservation and energy stability.

\subsection{Conservation}
\label{sub:cont_cons}

Recall that the exact solution of \cref{eq:IBVP_scalar} satisfies
\begin{equation}\label{eq:IBVP_cons}
	\frac{\d}{\d t} \int_{a}^{d} w \intd x = - \alpha \left[ w(d,t) - g(t) \right].
\end{equation}
This means that the total amount of the quantity $w$ (e.g., mass) is neither created nor destroyed inside the domain and only changes due to the flux across the boundaries.
This property is referred to as \emph{conservation}.
Conservation is an essential property in many physical applications, which should also hold for the overset IBVP \cref{eq:LR_IBVP_scalar}.

A natural counterpart of the linear function $\int_{a}^d w \intd x$ in \cref{eq:IBVP_cons} to the overset IBVPs \cref{eq:LR_IBVP_scalar} is the following \emph{overset integral}:
\begin{equation}\label{eq:overset_integral}
	\mathcal{I}(u,v)
		= \int_{a}^{c} u \intd x
		+ \int_{b}^{d} v \intd x
		- \int_{b}^{c} u \intd x.
\end{equation}
Note that the first and second integral in \cref{eq:overset_integral} both cover the overlap region $\Omega_{O} = [b,c]$.
The third integral removes the contributions of $u$ on $\Omega_{O}$ so that the overlap region does not contribute twice.

\begin{remark}
	As an alternative to \cref{eq:overset_integral}, one can also replace $\int_{b}^{c} u \intd x$ by a convex combination of $\int_{b}^{c} u \intd x$ and $\int_{b}^{c} v \intd x$; see \cite{kopriva2022theoretical}.
\end{remark}

To establish \emph{conservation of the overset IBVP} \cref{eq:LR_IBVP_scalar}, we show that $\mathcal{I}(u,v)$ satisfies a relation similar to \cref{eq:IBVP_cons}.
Let $u$ and $v$ be solutions of the overset IBVP \cref{eq:LR_IBVP_scalar}.
Differentiation in time and applying IBP in space to the integrals in \cref{eq:overset_integral} yields
\begin{equation}\label{eq:LR_IBVP_cons1}
	\frac{\d}{\d t} \mathcal{I}(u,v)
		= - \alpha \left[ u(c,t) - u(a,t) \right]
		- \alpha \left[ v(d,t) - v(b,t) \right]
		+ \alpha \left[ u(c,t) - u(b,t) \right].
\end{equation}
Recall that $u(a,t) = g(t)$ and $v(b,t) = u(b,t)$ due to the boundary and interface conditions in \cref{eq:LR_IBVP_scalar}.
Hence, \cref{eq:LR_IBVP_cons1} reduces to
\begin{equation}\label{eq:LR_IBVP_cons2}
	\frac{\d}{\d t} \mathcal{I}(u, v)
		= - \alpha \left[ v(d,t) - g(t) \right].
\end{equation}
Notably, \cref{eq:LR_IBVP_cons2} is similar to \cref{eq:IBVP_cons}, in the sense that the overset integral only changes due to boundary effects.
Hence, \cref{eq:LR_IBVP_cons2} establishes conservation of the overset IBVP \cref{eq:LR_IBVP_scalar}.

\subsection{Energy stability}
\label{sub:cont_energy}

Besides being conservative, exact solutions of the linear advection equation \cref{eq:IBVP_scalar} are also energy bounded.
That is, the growth of their energy over time is bounded in terms of the boundary data, as demonstrated by the energy method:
\begin{equation}\label{eq:cont_energy_rate}
	\frac{\d}{\d t} \| w \|_{L^2}^2
		= - 2 \alpha \int_{a}^{d} w \partial_x w \intd x
		= -\alpha \left[  w(d,t)^2 - g(t)^2 \right]
		\leq \alpha g(t)^2.
\end{equation}

To establish stability for the overset IBVP \cref{eq:LR_IBVP_scalar}, a similar bound is required.
To this end, we follow \cite{kopriva2022theoretical} and introduce the following \emph{overset energy}:
\begin{equation}\label{eq:overset_energy}
	\mathcal{E}(u,v)
		= \| u \|_{\Omega_u}^2
		+ \| v \|_{\Omega_v}^2
		- \| u \|_{\Omega_O}^2,
\end{equation}
where $\| u \|_{\Omega_u}^2 = \int_a^c u^2 \intd x$, $\| v \|_{\Omega_v}^2 = \int_b^d v^2 \intd x$, and $\| u \|_{\Omega_O}^2 = \int_b^c u^2 \intd x$.
The idea here is the same as for the overset integral $\mathcal{I}(u,v)$ in \cref{eq:overset_integral}, which is not to count the energy in $\Omega_O$ twice.

We next show that $\mathcal{E}(u,v)$ satisfies a relation similar to \cref{eq:cont_energy_rate}.
As before, let $u$ and $v$ be solutions of the overset IBVP \cref{eq:LR_IBVP_scalar}.
The energy method implies
\begin{equation}\label{eq:LR_IBVP_energy1}
	\frac{\d}{\d t} \mathcal{E}(u,v)
		= -\alpha \left[ u(c,t)^2 - u(a,t)^2 \right]
		- \alpha \left[  v(d,t)^2 - v(b,t)^2 \right]
		+ \alpha \left[  u(c,t)^2 - u(b,t)^2 \right].
\end{equation}
Importantly, \cref{eq:LR_IBVP_energy1} follows from applying IBP to each of the three sub-domains $\Omega_u$, $\Omega_v$, and $\Omega_O$.
Recalling from \cref{eq:LR_IBVP_scalar} that $u(a,t) = g(t)$ and $v(b,t) = u(b,t)$ transforms \cref{eq:LR_IBVP_energy1} into
\begin{equation}\label{eq:LR_IBVP_energy2}
	\frac{\d}{\d t} \mathcal{E}(u,v)
		= - \alpha \left[ v(d,t)^2 - g(t)^2 \right]
		\leq \alpha g(t)^2.
\end{equation}
Note that \cref{eq:LR_IBVP_energy2} is similar to \cref{eq:cont_energy_rate}, bounding the growth of the energy in terms of the boundary data.
\section{Sub-cell SBP operators}
\label{sec:operators}

As demonstrated above, conservation and energy stability for exact solutions of the overset domain problem \cref{eq:LR_IBVP_scalar} can be shown using IBP on each of the three sub-domains $\Omega_u = [a,c]$, $\Omega_v = [b,d]$, and $\Omega_O = [b,c]$.
To derive conservative and energy-stable semi-discretizations of \cref{eq:LR_IBVP_scalar}, it is therefore desirable to have discrete derivative and integration operators that mimic IBP on given sub-domains---henceforth also called sub-cells.
In \Cref{def:subcell_SBP} below, we propose such operators, which we refer to as sub-cell SBP operators.

Consider a generic cell $\Omega_{\rm ref} = [\omega_L,\omega_R]$ and let $\omega_M$ be a point between $\omega_L$ and $\omega_R$, separating the cell $\Omega_{\rm ref} = [\omega_L,\omega_R]$ into the two sub-cells $\Omega_{L} = [\omega_L,\omega_M]$ and $\Omega_{R} = [\omega_M,\omega_R]$.
Let $\mathbf{x}_L = [ (x_L)_1,\dots,(x_L)_{N_L}]$ and $\mathbf{x}_R = [ (x_R)_1,\dots,(x_R)_{N_R}]$ be vectors of nodes on $\Omega_L$ and $\Omega_R$, respectively, with $\omega_L \leq (x_L)_1 < \dots < (x_L)_{N_L} \leq \omega_M$ and $\omega_M \leq (x_R)_1 < \dots < (x_R)_{N_R} \leq \omega_R$.
Furthermore, $\mathbf{x} = [\mathbf{x}_L, \mathbf{x}_R] \in \Omega_{\rm ref}^N$ with $N = N_L + N_R$ is a vector of nodes on $\Omega_{\rm ref}$.
Notably, $(x_L)_{N_L} = (x_R)_1$ if both, $\mathbf{x}_L$ and $\mathbf{x}_R$, include $\omega_M$.
Finally, slightly abusing notation, we write $x_n \in \mathbf{x}_{L/R}$ when $x_n$ is a point in the grid $\mathbf{x}_{L/R}$ on $\Omega_{L/R}$, i.e., there exists an index $i$ such that $x_n = (x_{L/R})_i$.

\begin{definition}\label{def:subcell_SBP}
	By $(\Omega_{\rm ref}, \Omega_{L}, \Omega_{R})$ we denote a triple containing a cell $\Omega_{\rm ref} = [\omega_L, \omega_R]$ and the two sub-cells $\Omega_{L} = [\omega_L,\omega_M]$ and $\Omega_{R} = [\omega_M,\omega_R]$ with nodes $\mathbf{x}_L \in \Omega_L^{N_L}$ and $\mathbf{x}_R  \in \Omega_R^{N_R}$.
	Furthermore, let $\mathcal{F} \subset C^1(\Omega_{\rm ref})$ be a finite-dimensional function space.
	An operator $D = P^{-1} Q \in \R^{N \times N}$ approximating $\partial_x$ on the nodes $\mathbf{x} = [\mathbf{x}_L, \mathbf{x}_R] \in \Omega_{\rm ref}^N$ with $N = N_L + N_R$ is an \emph{$\mathcal{F}$-exact sub-cell SBP operator for $(\Omega_{\rm ref}, \Omega_{L}, \Omega_{R})$} if the following conditions are satisfied:
	\begin{enumerate}
		\item[(i)]
		$D \mathbf{f} = \mathbf{f'}$ for all $f \in \mathcal{F}$

		\item[(ii)]
		$P_{L/R} = \diag( \mathbf{p}_{L/R} )$ with $(p_{L/R})_n > 0$ if $x_n \in \mathbf{x}_{L/R}$ and $(p_{L/R})_n = 0$ otherwise

		\item[(iii)]
		$P_{L/R} D = Q_{L/R}$ with $Q_{L/R} + Q_{L/R}^T = B_{L/R}$

		\item[(iv)]
		$\mathbf{f}^T B_{L/R} \mathbf{g} = fg|_{\partial \Omega_{L/R}}$ for all $f, g \in \mathcal{F}$

		\item[(v)]
		$P = P_L + P_R$, $B = B_L + B_R$, and $Q = Q_L + Q_R$

	\end{enumerate}
	Here, $\mathbf{f} = [ f(x_1), \dots, f(x_N) ]^T$ and $\mathbf{f'} = [ f'(x_1), \dots, f'(x_N) ]^T$ denote the nodal values of $f \in \mathcal{F}$ and its derivative at the nodes $\mathbf{x} = [x_1,\dots,x_N]$.
\end{definition}

Relation (i) ensures that $D$ accurately approximates $\partial_x$ at the nodes $\mathbf{x}$ by requiring $D$ to be exact for all functions from $\mathcal{F}$.
Condition (ii) guarantees that $P_L$ and $P_R$ induce a discrete inner product and norm on $\Omega_{L}$ and $\Omega_R$, respectively, which are given by $\scp{\mathbf{f}}{\mathbf{g}}_{P_{L/R}} = \mathbf{f}^T P_{L/R}  \mathbf{g} = \sum_{n=1}^N (p_{L/R})_n f_n g_n$ and $\|\mathbf{f}\|^2_{P_{L/R}} = \scp{\mathbf{f}}{\mathbf{f}}_{P_{L/R}}$ for $\mathbf{f},\mathbf{g} \in \R^N$.
Relations (iii) encode the sub-cell SBP properties on the sub-cells $\Omega_{L}$ and $\Omega_R$.
Recall that IBP on $\Omega_{L/R}$ reads
\begin{equation}\label{eq:subcell_IBP}
	\int_{\Omega_{L/R}} f (\partial_x g) \intd x + \int_{\Omega_{L/R}} (\partial_x f) g \intd x
		= fg|_{\partial \Omega_{L/R}},
		\quad \forall f, g \in C^1(\Omega_{L/R}).
\end{equation}
The discrete versions of \cref{eq:subcell_IBP}, which follows from (iii), is
\begin{equation}\label{eq:subcell_SBP}
	\mathbf{f}^T P_{L/R} ( D \mathbf{g} ) + ( D \mathbf{f} )^T P_{L/R} \mathbf{g}
		= \mathbf{f}^T B_{L/R} \mathbf{g},
		\quad \forall \mathbf{f}, \mathbf{g} \in \R^N.
\end{equation}
Note that the two terms on the left-hand side of \cref{eq:subcell_SBP} approximate the related terms on the left-hand side of \cref{eq:subcell_IBP}.
Moreover, (iv) in \Cref{def:subcell_SBP} ensures that also the right-hand side of \cref{eq:subcell_SBP} accurately approximates the right-hand side of \cref{eq:subcell_IBP}.
To this end, the boundary operators $B_L$ and $B_R$ must be exact for all functions $f,g \in \mathcal{F}$.
Finally, (v) implies that $D = P^{-1} Q$ is an FSBP operator with $Q + Q^T = B$, mimicking IBP on the whole cell $\Omega_{\rm ref}$.
\Cref{fig:schematic_sub_cell_SBP} illustrates the basic idea behind the proposed sub-cell SBP operators.

\begin{remark}
	\Cref{def:subcell_SBP} is based on the recently introduced concept of FSBP operators for general (potentially non-polynomial) function spaces described in \cite{glaubitz2022summation,glaubitz2023multi,glaubitz2024summation,glaubitz2024optimization}.
	Making this generalization enables the systematic development of a broad class of provably stable schemes \cite{glaubitz2022energy}, without substantially increasing the complexity of our discussion.
\end{remark}

\begin{figure}[tb]
\centering
\resizebox{.99\textwidth}{!}{%
\begin{tikzpicture}[scale=2.75]

    \def\funct#1{1 - 0.2*#1 + 0.2*(#1*#1-2) - 0.03*#1*#1*#1}

	\coordinate (w_L) at (0,0);
	\coordinate (w_M) at (3,0);
	\coordinate (w_R) at (5,0);
	\coordinate (w_LM) at ($(w_L)!0.5!(w_M)$);
	\coordinate (w_MR) at ($(w_M)!0.5!(w_R)$);
	
	\draw[thick, blue] (w_L) -- (w_M);
	\draw[thick, red] (w_M) -- (w_R);

	\foreach \x in {0,1.5,3,4,5} {
    		\draw[thick] (\x,-0.05) -- (\x,0.05);
	}

    \node[above, blue] at (1.3,0) {$\Omega_L$};
    \node[above, red] at (4.2,0) {$\Omega_R$};
    
	\node[below] at (w_L) {$\omega_L = (x_L)_1$};
	\node[below] at (w_LM) {$(x_L)_2$};
	\node[below] at (w_M) {$(x_L)_3 = \omega_M = (x_R)_1$};
	\node[below] at (w_MR) {$(x_R)_2$};
	\node[below] at (w_R) {$(x_R)_3 = \omega_R$};
	
	\draw[domain=0:5, thick, smooth, variable=\x] plot ({\x}, {\funct{\x}});
	
	\foreach \xx in {3,4,5} {
		\draw[red, thick, fill]
        (\xx,{\funct{\xx}}) ++(0,0.05) -- ++(0.05,-0.05) -- ++(-0.05,-0.05)
        -- ++(-0.05,0.05) -- cycle;
   	}
	\foreach \xx in {0,1.5,3} {
		\filldraw[blue] (\xx, {\funct{\xx}}) circle (0.8pt); 
	}
	 
	\foreach \xx in {0,1.5,3} {
		\draw[thick, blue, dashed] (\xx,0) -- (\xx, {\funct{\xx}}); 
	}
	\foreach \xx in {3,4,5} {
		\draw[thick, red, dotted] (\xx,0) -- (\xx, {\funct{\xx}}); 
	}

	\draw[thick, blue, decorate, decoration={brace,amplitude=20pt,raise=4ex}]
  (0,0.85) -- (3,0.85) node[midway, yshift=5em]{{$
  		\mathbf{u}^T P_L D \mathbf{v} + \mathbf{u}^T D^T P_L \mathbf{v} = \mathbf{u}^T B_L \mathbf{v}
  	$}};

	\draw[thick, blue, decorate, decoration={brace,amplitude=20pt,mirror,raise=4ex}]
  (w_L) -- (w_M) node[midway, yshift=-5em]{{$
  		\int_{\Omega_L} u\,(\partial_x v) + \int_{\Omega_L} (\partial_x u)\, v = uv|_{\partial \Omega_L}
  	$}};

	\draw[thick, red, decorate, decoration={brace,amplitude=20pt,raise=4ex}]
  (3,0.85) -- (5,0.85) node[midway, yshift=5em]{{$
  		\mathbf{u}^T P_R D \mathbf{v} + \mathbf{u}^T D^T P_R \mathbf{v} = \mathbf{u}^T B_R \mathbf{v}
  	$}};

	\draw[thick, red, decorate, decoration={brace,amplitude=20pt,mirror,raise=4ex}]
  (w_M) -- (w_R) node[midway, yshift=-5em]{{$
  		\int_{\Omega_R} u\,(\partial_x v) + \int_{\Omega_R} (\partial_x u)\, v = uv|_{\partial \Omega_R}
  	$}};

\end{tikzpicture}
}%
\caption{
  	Schematic illustration of how sub-cell SBP operators (top) mimic IBP (bottom) on the left (blue) and right (red) sub-cell
}
\label{fig:schematic_sub_cell_SBP}
\end{figure}
\section{Application to overset domain methods}
\label{sec:application}

We now leverage the proposed sub-cell SBP operators and simultaneous approximation terms (SATs) to develop SBP-SAT semi-discretizations of the overset domain problem \cref{eq:LR_IBVP_general} that are provably conservative and energy-stable for the linear advection problem \cref{eq:LR_IBVP_scalar}.

\subsection{Single-block overset domain methods}
\label{sub:application_single}

Consider \cref{eq:LR_IBVP_scalar} on $\Omega = [a,d]$ with the two overlapping domains $\Omega_u = [a,c]$ and $\Omega_v = [b,d]$.
Following \Cref{def:subcell_SBP}, let $D_u$ and $D_v$ be sub-cell SBP operators for $(\Omega_u,\Omega_{\bar{u}},\Omega_O)$ and $(\Omega_v,\Omega_{O},\Omega_{\bar{v}})$ approximating $\partial_x$ on the nodes $\mathbf{x}_{u} = [\mathbf{x}_{\bar{u}}; \mathbf{x}_{O_u}] \in \Omega_u^{N_u}$ and $\mathbf{x}_{v} = [\mathbf{x}_{O_v}; \mathbf{x}_{\bar{v}}] \in \Omega_v^{N_v}$, respectively.
That is, $D_u$ is a sub-cell SBP operator on $\Omega_u$ that mimics IBP on $\Omega_u$, $\Omega_{\bar{u}}$, and $\Omega_O$.
We denote the associated operators on $\Omega_u$ as $P_u, Q_u, B_u$, on $\Omega_{\bar{u}}$ as $P_{\bar{u}}, Q_{\bar{u}}, B_{\bar{u}}$, and on $\Omega_O$ as $P_{O_u}, Q_{O_u}, B_{O_u}$.
Specifically, $(p_{\bar{u}})_n > 0$ if and only if $(x_u)_n \in \mathbf{x}_{\bar{u}}$ and $(p_{O_u})_n > 0$ if and only if $(x_u)_n \in \mathbf{x}_{O_u}$. 
Similarly, $D_v$ is a sub-cell SBP operator on $\Omega_v$ that mimics IBP on $\Omega_v$, $\Omega_{O}$, and $\Omega_{\bar{v}}$.
We denote the associated operators on $\Omega_v$ as $P_v, Q_v, B_v$, on $\Omega_{O}$ as $P_{O_v}, Q_{O_v}, B_{O_v}$, and on $\Omega_{\bar{v}}$ as $P_{\bar{v}}, Q_{\bar{v}}, B_{\bar{v}}$.

Using the sub-cell SBP operators $D_u$ and $D_v$ introduced above, we consider the following \emph{single-block SBP-SAT semi-discretization} of the overset domain problem \cref{eq:LR_IBVP_general}:
\begin{equation}\label{eq:LR_disc_general}
	\mathbf{u}_t + D_u \mathbf{f}_u
		= P_u^{-1} \mathbb{S}_u, \quad
	\mathbf{v}_t + D_v \mathbf{f}_v
		= P_v^{-1} \mathbb{S}_v
\end{equation}
Here, $\mathbf{u} = [\boldsymbol{u}_1,\dots,\boldsymbol{u}_{N_u}]^T$ and $\mathbf{v} = [\boldsymbol{v}_1,\dots,\boldsymbol{v}_{N_v}]^T$ contain the nodal values of the numerical solutions on $\Omega_u$ and $\Omega_v$ at the nodes $\mathbf{x}_u = [(x_u)_1,\dots,(x_u)_{N_u}] \in \Omega_u^{N_u}$ and $\mathbf{x}_v = [(x_v)_1,\dots,(x_v)_{N_v}] \in \Omega_v^{N_v}$, respectively.
At the same time, $\mathbf{f}_u = [\boldsymbol{f}(\boldsymbol{u}_1), \dots, \boldsymbol{f}(\boldsymbol{u}_{N_u})]^T$ and $\mathbf{f}_v = [\boldsymbol{f}(\boldsymbol{v}_1), \dots, \boldsymbol{f}(\boldsymbol{v}_{N_v})]^T$ are vectors containing the flux values for $\mathbf{u}$ and $\mathbf{v}$.
Furthermore, $\mathbb{S}_u \in \R^{N_u}$ and $\mathbb{S}_v \in \R^{N_v}$ are \emph{sub-cell} SATs defined as
\begin{equation}\label{eq:SATs}
\begin{aligned}
	\mathbb{S}_u
		& = \mathbf{e}_{a,u} [ \fnum( \boldsymbol{g}_L, \boldsymbol{u}_a ) - \boldsymbol{f}( \boldsymbol{u}_a ) ]
			- \mathbf{e}_{c,u} [ \fnum( \boldsymbol{u}_c, \boldsymbol{v}_{c_R} ) - \boldsymbol{f}( \boldsymbol{u}_c ) ] \\
		& \quad + \mathbf{e}_{b_R,u} [ \fnum( \boldsymbol{u}_{b_L}, \boldsymbol{u}_{b_R} ) - \boldsymbol{f}( \boldsymbol{u}_{b_R} ) ]
			- \mathbf{e}_{b_L,u} [ \fnum( \boldsymbol{u}_{b_L}, \boldsymbol{u}_{b_R} ) - \boldsymbol{f}( \boldsymbol{u}_{b_L} ) ], \\
	\mathbb{S}_v
		& = \mathbf{e}_{b,v} [ \fnum( \boldsymbol{u}_{b_L}, \boldsymbol{v}_b ) - \boldsymbol{f}( \boldsymbol{v}_b ) ]
			- \mathbf{e}_{d,v} [ \fnum( \boldsymbol{v}_d, \boldsymbol{g}_R ) - f( \boldsymbol{v}_d ) ] \\
		& \quad + \mathbf{e}_{c_R,v} [ \fnum( \boldsymbol{v}_{c_L}, \boldsymbol{v}_{c_R} ) - \boldsymbol{f}( \boldsymbol{v}_{c_R} ) ]
			- \mathbf{e}_{c_L,v} [ \fnum( \boldsymbol{v}_{c_L}, \boldsymbol{v}_{c_R} ) - \boldsymbol{f}( \boldsymbol{v}_{c_L} ) ].
\end{aligned}
\end{equation}
The SATs in \cref{eq:SATs} weakly enforce the boundary and interface conditions in \cref{eq:LR_IBVP_scalar} by employing a numerical flux function $\fnum$.
Specifically, the first and second terms for $\mathbb{S}_u$ in \cref{eq:SATs} weakly enforce the inflow boundary condition at the left and right domain boundary, respectively.
At the same time, the third and fourth terms for $\mathbb{S}_u$ in \cref{eq:SATs} weakly couple the two sub-elements $[a,b]$ and $[b,c]$.
These two terms are only present when using a sub-cell SBP operator and vanish when $D_u$ is a standard SBP operator.
Moreover, the SATs involve discrete projection operators $\mathbf{e}_{a,u}, \mathbf{e}_{b_L,u}, \mathbf{e}_{b_R,u}, \mathbf{e}_{c,u} \in \R^{N_u}$, and $\mathbf{e}_{b,v}, \mathbf{e}_{c_L,b}, \mathbf{e}_{c_R,b}, \mathbf{e}_{d,b} \in \R^{N_v}$ as well as the projected numerical solutions $\boldsymbol{u}_a = \mathbf{e}_{a,u}^T \mathbf{u} \approx \boldsymbol{u}(a)$, $\boldsymbol{u}_{b_L} = \mathbf{e}_{b_L,u}^T \mathbf{u} \approx \boldsymbol{u}(b)$, $\boldsymbol{u}_{b_R} = \mathbf{e}_{b_R,u}^T \mathbf{u} \approx \boldsymbol{u}(b)$, $\boldsymbol{u}_c = \mathbf{e}_{c,u}^T \mathbf{u} \approx \boldsymbol{u}(c)$ and $\boldsymbol{v}_b = \mathbf{e}_{b,v}^T \mathbf{v} \approx \boldsymbol{v}(b)$, $\boldsymbol{v}_{c_L} = \mathbf{e}_{c_L,v}^T \mathbf{v} \approx \boldsymbol{v}(c)$, $\boldsymbol{v}_{c_R} = \mathbf{e}_{c_R,v}^T \mathbf{v} \approx \boldsymbol{v}(c)$, $\boldsymbol{v}_d = \mathbf{e}_{d,v}^T \mathbf{v} \approx \boldsymbol{v}(d)$.
We discuss these projections and their importance in detail in \Cref{sub:application_projections}.

For the subsequent theoretical analysis, we focus on the linear advection equation \cref{eq:LR_IBVP_scalar} with $\alpha>0$.
In this case, our analysis requires only $D_u$ to be a sub-cell SBP operator, whereas $D_v$ may be a standard SBP operator.
Hence, the last two terms of $\mathbb{S}_v$ in \cref{eq:SATs} vanish.
Furthermore, we then have $f(w) = \alpha w$ and we use the usual full-upwind numerical flux $\fnum( w_L, w_R ) = \alpha w_L$.
Under these assumptions, the sub-cell semi-discretization \cref{eq:LR_IBVP_general} and SATs \cref{eq:SATs} reduce to
\begin{equation}\label{eq:LR_disc}
\begin{aligned}
	& \mathbf{u}_t + \alpha D_u \mathbf{u}
		= P_u^{-1}\mathbb{S}_u, \quad
		&& \mathbb{S}_u = \alpha \mathbf{e}_{a,u} [ g_L(t) - u_a ] + \alpha \mathbf{e}_{b_R,u} [ u_{b_L} - u_{b_R} ], \\
	& \mathbf{v}_t + \alpha D_v \mathbf{v}
		= P_v^{-1}\mathbb{S}_v, \quad
		&& \mathbb{S}_v = \alpha \mathbf{e}_{b,v} [ u_{b_L} - v_b ].
\end{aligned}
\end{equation}

\begin{remark}\label{rem:projections}
	We write ``$u_{b_L}$", ``$u_{b_R}$" and ``$\mathbf{e}_{b_L,u}$", ``$\mathbf{e}_{b_R,u}$" instead of ``$u_{b}$" and ``$\mathbf{e}_{b,u}$" because $\mathbf{e}_{b_L,u}$ and $\mathbf{e}_{b_R,u}$ are one-sided projection operators that are supported on the nodes in $\Omega_{\bar{u}}$ and $\Omega_{O}$, respectively.
	That is, $(e_{b_L,u})_n = 0$ if $(x_u)_n \in \mathbf{x}_{O_u}$ and $(e_{b_R,u})_n = 0$ if $(x_u)_n \in \mathbf{x}_{\bar{u}}$.
	This restriction ensures the existence of the proposed sub-cell SBP operators, as will be shown in \Cref{sec:existence}.
\end{remark}

\subsection{On the discrete projection and boundary operators}
\label{sub:application_projections}

We now provide details on the discrete projection operators $\mathbf{e}_{a,u}$, $\mathbf{e}_{b_L,u}$, $\mathbf{e}_{b_R,u}$, $\mathbf{e}_{c,u}$ on $\Omega_u$ and their connection to the boundary operators $B_L$, $B_R$, and $B$ in \cref{def:subcell_SBP}.
The same arguments also apply to the discrete projection operators $\mathbf{e}_{b,v}$, $\mathbf{e}_{c_L,v}$, $\mathbf{e}_{c_R,v}$, $\mathbf{e}_{d,v}$ on $\Omega_v$.

The projection operators $\mathbf{e}_{a,u}, \mathbf{e}_{c,u} \in \R^{N_u}$ use the nodal values of $u$ on the nodes $\mathbf{x}_u \in \Omega_u^{N_u}$ to approximate $u(a)$ and $u(c)$ as $\mathbf{e}_{a,u}^T \mathbf{u}$ and $\mathbf{e}_{c,u}^T \mathbf{u}$, respectively.
For instance, if $a$ and $c$ coincide with the first and last node of $\mathbf{x}_u$, then $\mathbf{e}_{a,u} = [1,0,\dots,0]^T$ and $\mathbf{e}_{c,u} = [0,\dots,0,1]^T$.
Furthermore, we denote by $\mathbf{e}_{b_L,u} \in \R^{N_u}$ a one-sided projection operator with $\mathbf{e}_{b_L,u}^T \mathbf{u} \approx u(b)$ that is supported on the grid for $\Omega_{\bar{u}}$, i.e., $(e_{b_L,u})_n = 0$ if $(x_u)_n \in \mathbf{x}_{O_u}$.
Similarly, $\mathbf{e}_{b_R,u} \in \R^{N_u}$ denotes a one-sided projection operator with $\mathbf{e}_{b_R,u}^T \mathbf{u} \approx u(b)$ that is supported on the grid for $\Omega_{O_u}$, i.e., $(e_{b_R,u})_n = 0$ if $(x_u)_n \in \mathbf{x}_{\bar{u}}$.
We use one-sided projections at the point $b$ to ensure conservation and energy-stability as well as the existence of the sub-cell SBP operator, as discussed in \Cref{sec:existence}.

Using the above projections, we make the common assumption that the boundary operators $B_{\bar{u}}$ and  $B_{O_u}$ can be written as
\begin{equation}\label{eq:B_form}
	B_{\bar{u}} = \mathbf{e}_{b_L,u} \mathbf{e}_{b_L,u}^T - \mathbf{e}_{a,u} \mathbf{e}_{a,u}^T, \quad
	B_{O_u} =  \mathbf{e}_{c,u} \mathbf{e}_{c,u}^T - \mathbf{e}_{b_R,u} \mathbf{e}_{b_R,u}^T.
\end{equation}
Moreover, $B_u = B_{\bar{u}} + B_{O_u}$.
Note that $B_{\bar{u}}, B_{O_u} \in \R^{N_u \times N_u}$ in \cref{eq:B_form} are boundary operators which approximate the right-hand sides in \cref{eq:subcell_IBP} as they satisfy
\begin{equation}
\begin{aligned}
	\mathbf{f}^T B_{\bar{u}} \mathbf{g}
		= ( \mathbf{e}_{b_L,u}^T \mathbf{f} ) ( \mathbf{e}_{b_L,u}^T \mathbf{g} ) - ( \mathbf{e}_{a,u}^T \mathbf{f} ) ( \mathbf{e}_{a,u}^T \mathbf{g} )
		& \approx
		f(b) g(b) - f(a) g(a)
		= fg|_a^b, \\
	\mathbf{f}^T B_{O_u} \mathbf{g}
		= ( \mathbf{e}_{c,u}^T \mathbf{f} ) ( \mathbf{e}_{c,u}^T \mathbf{g} ) - ( \mathbf{e}_{b_R,u}^T \mathbf{f} ) ( \mathbf{e}_{b_R,u}^T \mathbf{g} )
		& \approx
		f(c) g(c) - f(b) g(b)
		= fg|_b^c.
\end{aligned}
\end{equation}
Notably, the operators $B_{\bar{u}}$ and $B_{O_u}$ above satisfy the exactness conditions (iv) in \Cref{def:subcell_SBP} if and only if the projection operators $\mathbf{e}_{a,u}, \mathbf{e}_{b_L,u}, \mathbf{e}_{b_R,u}, \mathbf{e}_{c,u}$ satisfy
\begin{equation}\label{eq:projections_exactness_u}
	\mathbf{e}_{a,u}^T \mathbf{f} = f(a), \quad
	\mathbf{e}_{b_L,u}^T \mathbf{f} = f(b), \quad
	\mathbf{e}_{b_R,u}^T \mathbf{f} = f(b), \quad
	\mathbf{e}_{c,u}^T \mathbf{f} = f(c), \quad
	\forall f \in \mathcal{F}.
\end{equation}
That is, the projection operators must be exact for all vectors corresponding to the nodal values of functions within the function space for which the sub-cell SBP is exact. 
Similarly, we have $B_v = \mathbf{e}_{d,v} \mathbf{e}_{d,v}^T - \mathbf{e}_{b,v} \mathbf{e}_{b,v}^T$ and $\mathbf{f}^T B_v \mathbf{g} \approx fg|_b^d$.
For brevity, we write $u_a = \mathbf{e}_{a,u}^T \mathbf{u}$, $u_{b_L} = \mathbf{e}_{b_L,u}^T \mathbf{u}$, $u_{b_R} = \mathbf{e}_{b_R,u}^T \mathbf{u}$, $u_c = \mathbf{e}_{c,u}^T \mathbf{u}$ and $v_b = \mathbf{e}_{b,v}^T \mathbf{v}$, $v_d = \mathbf{e}_{d,v}^T \mathbf{v}$.
We can thus rewrite, for instance, $\mathbf{f}^T B_{\bar{u}} \mathbf{g}$ as $f_{b_L} g_{b_L} - f_a g_a$ and $\mathbf{f}^T B_{v} \mathbf{g}$ as $f_{d} g_{d} - f_b g_b$.

\subsection{Proving discrete conservation}
\label{sub:application_conservation}

We now prove conservation for the sub-cell semi-discretization \cref{eq:LR_disc} of the overset domain problem for the linear advection equation \cref{eq:LR_IBVP_scalar} with $\alpha>0$.
To this end, consider the overset integral $\mathcal{I}(u,v)$ in \cref{eq:overset_integral}.
A discrete version of the overset integral for the sub-cell semi-discretization \cref{eq:LR_disc} is given by
\begin{equation}\label{eq:overset_integral_discrete}
	I(\mathbf{u},\mathbf{v})
		= \mathbf{1}^T P_u \mathbf{u}
		+ \mathbf{1}^T P_v \mathbf{v}
		- \mathbf{1}^T P_{O_u} \mathbf{u},
\end{equation}
where $\mathbf{1}^T P_u \mathbf{u} \approx \int_a^c u \intd x$, $\mathbf{1}^T P_v \mathbf{v} \approx \int_b^d v \intd x$, and $\mathbf{1}^T P_{O_u} \mathbf{u} \approx \int_b^c u \intd x$.
Recall that \cref{eq:LR_IBVP_cons2} established conservation for solutions of the continuous overset IBVP \cref{eq:LR_IBVP_scalar}.
It is therefore desirable to mimic \cref{eq:LR_IBVP_cons2} for the numerical scheme.
To this end, we need to show that the semi-discretization \cref{eq:LR_disc} satisfies
\begin{equation}\label{eq:conservation_discrete4}
	\frac{\d}{\d t} I(\mathbf{u},\mathbf{v})
		= - \alpha \left[ v_d - g_L(t) \right],
\end{equation}
which is the discrete counterpart to \cref{eq:LR_IBVP_cons2}.

\begin{lemma}\label{lem:cons}
	Let $D_u$ be a sub-cell SBP operator for $(\Omega_u,\Omega_{\bar{u}},\Omega_O)$ and let $D_v$ be an SBP operator on $\Omega_v$.
	Furthermore, let $B_{\bar{u}}$ and $B_{O_u}$ be as in \cref{eq:B_form} with $(e_{a,u})_n = 0$ if $(x_u)_n \in \mathbf{x}_{O_u}$ and $(e_{b_R,u})_n = 0$ if $(x_{u})_n \in \mathbf{x}_{\bar{u}}$.
	Then, the sub-cell semi-discretization \cref{eq:LR_disc} is conservative, i.e., \cref{eq:conservation_discrete4} holds.
\end{lemma}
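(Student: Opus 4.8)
The plan is to mimic the continuous conservation argument \cref{eq:LR_IBVP_cons1}--\cref{eq:LR_IBVP_cons2} at the discrete level. First I would differentiate the discrete overset integral \cref{eq:overset_integral_discrete} in time and insert the semi-discretization \cref{eq:LR_disc}, writing
\[
\frac{\d}{\d t} I(\mathbf{u},\mathbf{v}) = \mathbf{1}^T P_u \mathbf{u}_t + \mathbf{1}^T P_v \mathbf{v}_t - \mathbf{1}^T P_{O_u}\mathbf{u}_t,
\]
and substituting $\mathbf{u}_t = -\alpha D_u\mathbf{u} + P_u^{-1}\mathbb{S}_u$ and $\mathbf{v}_t = -\alpha D_v\mathbf{v} + P_v^{-1}\mathbb{S}_v$. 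This produces the volume terms $\mathbf{1}^T P_u D_u\mathbf{u}$, $\mathbf{1}^T P_v D_v\mathbf{v}$, $\mathbf{1}^T P_{O_u}D_u\mathbf{u}$ together with the SAT contributions $\mathbf{1}^T\mathbb{S}_u$, $\mathbf{1}^T\mathbb{S}_v$, and the overlap term $\mathbf{1}^T P_{O_u}P_u^{-1}\mathbb{S}_u$.

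Next I would reduce every volume term to a boundary term. Since the constant function lies in $\mathcal{F}$, exactness (i) gives $D_u\mathbf{1}=D_v\mathbf{1}=\mathbf{0}$, and hence $Q_u\mathbf{1}=Q_v\mathbf{1}=Q_{O_u}\mathbf{1}=\mathbf{0}$ by (iii). Combining this with $Q+Q^T=B$ yields $\mathbf{1}^T Q=\mathbf{1}^T B$ for each operator, so that $\mathbf{1}^T P_u D_u\mathbf{u}=\mathbf{1}^T B_u\mathbf{u}$, $\mathbf{1}^T P_v D_v\mathbf{v}=\mathbf{1}^T B_v\mathbf{v}$, and $\mathbf{1}^T P_{O_u}D_u\mathbf{u}=\mathbf{1}^T B_{O_u}\mathbf{u}$ — the discrete analogue of integrating by parts on $\Omega_u$, $\Omega_v$, and $\Omega_O$. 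Using the explicit forms \cref{eq:B_form} of the boundary operators together with projection exactness on constants (i.e. $\mathbf{1}^T\mathbf{e}=1$, obtained from \cref{eq:projections_exactness_u} with $f\equiv 1$), these evaluate to $\mathbf{1}^T B_u\mathbf{u}=u_{b_L}-u_a+u_c-u_{b_R}$, $\mathbf{1}^T B_v\mathbf{v}=v_d-v_b$, and $\mathbf{1}^T B_{O_u}\mathbf{u}=u_c-u_{b_R}$.

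The delicate step — and the one I expect to be the main obstacle — is the overlap SAT term $\mathbf{1}^T P_{O_u}P_u^{-1}\mathbb{S}_u$, which has no continuous counterpart. Here I would exploit the one-sided support hypotheses of the lemma. Because $P_{O_u}$ is diagonal and vanishes off $\mathbf{x}_{O_u}$, while $\mathbf{e}_{a,u}$ is supported on $\mathbf{x}_{\bar{u}}$ (by $(e_{a,u})_n=0$ for $(x_u)_n\in\mathbf{x}_{O_u}$), the product $P_{O_u}P_u^{-1}\mathbf{e}_{a,u}$ vanishes identically, so the inflow SAT cannot leak into the overlap. For the coupling term I would observe that $\mathbf{e}_{b_R,u}$ is supported on nodes of $\mathbf{x}_{O_u}$ lying off $\mathbf{x}_{\bar{u}}$ (by $(e_{b_R,u})_n=0$ for $(x_u)_n\in\mathbf{x}_{\bar{u}}$); on exactly those nodes $(p_{\bar{u}})_n=0$ forces $(p_u)_n=(p_{O_u})_n$, so the diagonal ratio $(p_{O_u})_n/(p_u)_n$ equals one there. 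Consequently $\mathbf{1}^T P_{O_u}P_u^{-1}\mathbf{e}_{b_R,u}=\mathbf{1}^T\mathbf{e}_{b_R,u}=1$ and $\mathbf{1}^T P_{O_u}P_u^{-1}\mathbb{S}_u=\alpha(u_{b_L}-u_{b_R})$. This is precisely where the disjoint-support conditions are indispensable: they guarantee that the whole-cell and sub-cell quadrature weights coincide on the relevant nodes, so the overlap mass matrix acts as an exact restriction.

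Finally I would assemble the pieces. The residual internal terms $\pm\alpha(u_{b_L}-u_{b_R})$ arising from the mismatch of the one-sided projections in $\mathbf{1}^T B_u\mathbf{u}$ are cancelled by the coupling SAT $\mathbf{1}^T\mathbb{S}_u$, exactly as the internal flux at $b$ cancels in the continuous derivation; the interface SAT in $\mathbb{S}_v$ replaces $v_b$ by $u_{b_L}$; and the overlap contribution $-\mathbf{1}^T P_{O_u}\mathbf{u}_t=\alpha u_c-\alpha u_{b_L}$ telescopes against the $u_c$ and $u_{b_L}$ terms from the $\Omega_u$ and $\Omega_v$ blocks. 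What survives is $\frac{\d}{\d t}I(\mathbf{u},\mathbf{v})=-\alpha(v_d-g_L(t))$, which is \cref{eq:conservation_discrete4} and establishes discrete conservation.
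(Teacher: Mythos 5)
Your proof is correct, and every step checks out: the conversion of volume terms to boundary terms via $D_u\mathbf{1} = D_v\mathbf{1} = \mathbf{0}$ and condition (iii), the evaluation $\mathbf{1}^T B_u\mathbf{u} = u_{b_L} - u_a + u_c - u_{b_R}$, and the treatment of the overlap SAT term all hold, and the final cancellation yields \cref{eq:conservation_discrete4}. However, your route differs from the paper's in its organization. The paper's first move is to invoke (v), $P_u = P_{\bar{u}} + P_{O_u}$, to collapse the discrete overset integral to $I(\mathbf{u},\mathbf{v}) = \mathbf{1}^T P_{\bar{u}}\mathbf{u} + \mathbf{1}^T P_v\mathbf{v}$ \emph{before} differentiating; it then only needs discrete IBP on $\Omega_{\bar{u}}$ and $\Omega_v$ (i.e., only $B_{\bar{u}}$ and $B_v$ ever appear), and both cross terms die immediately from $P_{O_u}\mathbf{e}_{a,u} = \mathbf{0}$ and $P_{\bar{u}}\mathbf{e}_{b_R,u} = \mathbf{0}$. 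You instead keep the three-term form of $I(\mathbf{u},\mathbf{v})$ and apply discrete IBP separately on $\Omega_u$, $\Omega_v$, and $\Omega_O$ — a literal transcription of the continuous derivation \cref{eq:LR_IBVP_cons1} — at the cost of an extra overlap SAT term $\mathbf{1}^T P_{O_u}P_u^{-1}\mathbb{S}_u$, which you resolve by noting that $(p_{O_u})_n/(p_u)_n = 1$ on the support of $\mathbf{e}_{b_R,u}$; this ratio argument is equivalent to, but stated differently from, the paper's $P_{\bar{u}}\mathbf{e}_{b_R,u} = \mathbf{0}$. The trade-off: the paper's proof is shorter and needs fewer ingredients (it never uses $B_{O_u}$ or the sub-cell SBP property on $\Omega_O$), while yours mirrors the continuous three-sub-domain argument more faithfully and makes explicit how each boundary contribution at $b$ and $c$ cancels, which arguably better illuminates why the sub-cell structure is what makes conservation work. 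Both proofs share the same essential hypotheses, including the implicit assumption $1 \in \mathcal{F}$.
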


\begin{proof}
Recall from (v) in \Cref{def:subcell_SBP} that $P_u = P_{\bar{u}} + P_{O_u}$.
Hence, we can rewrite \cref{eq:overset_integral_discrete} more compactly as $I(\mathbf{u},\mathbf{v}) = \mathbf{1}^T P_{\bar{u}} \mathbf{u} + \mathbf{1}^T P_v \mathbf{v}$, where $\mathbf{1}^T P_{\bar{u}} \mathbf{u} \approx \int_a^b u \intd x$.
We then get
\begin{equation}\label{eq:conservation_discrete1}
	\frac{\d}{\d t} I(\mathbf{u},\mathbf{v})
		= - \alpha \mathbf{1}^T P_{\bar{u}} D_u \mathbf{u}
		+ \mathbf{1}^T P_{\bar{u}} P_u^{-1} \mathbb{S}_u
		- \alpha \mathbf{1}^T P_v D_v \mathbf{v}
		+ \mathbf{1}^T P_v P_v^{-1} \mathbb{S}_v,
\end{equation}
where we have used that $\mathbf{u}$ and $\mathbf{v}$ satisfy the semi-discretization \cref{eq:LR_disc}.
Observe that (iii) in \Cref{def:subcell_SBP} transforms $\mathbf{1}^T P_{\bar{u}} D_u \mathbf{u}$ into $\mathbf{1}^T B_{\bar{u}} \mathbf{u} - (D_u \mathbf{1})^T P_{\bar{u}} \mathbf{u}$.
Furthermore, if $1 \in \mathcal{F}$, then (i) in \Cref{def:subcell_SBP} implies $D_u \mathbf{1} = \mathbf{0}$ and we get $\mathbf{1}^T P_{\bar{u}} D_u \mathbf{u} = \mathbf{1}^T B_{\bar{u}} \mathbf{u}$.
We also get $\mathbf{1}^T P_v D_v \mathbf{v} = \mathbf{1}^T B_v \mathbf{v}$.
To summarize, the sub-cell SBP properties transform \cref{eq:conservation_discrete1} into
\begin{equation}\label{eq:conservation_discrete2}
	\frac{\d}{\d t} I(\mathbf{u},\mathbf{v})
		= - \alpha \mathbf{1}^T B_{\bar{u}} \mathbf{u}
		+ \mathbf{1}^T P_{\bar{u}} P_u^{-1} \mathbb{S}_u
		- \alpha \mathbf{1}^T B_v \mathbf{v}
		+ \mathbf{1}^T \mathbb{S}_v.
\end{equation}
Next, the notation in \Cref{sub:application_projections} implies $\mathbf{1}^T B_{\bar{u}} \mathbf{u} = u_{b_L} - u_a$ and $\mathbf{1}^T B_v \mathbf{v} = v_d - v_b$.
Furthermore, note that
\begin{equation}
	\mathbf{1}^T P_{\bar{u}} P_u^{-1} \mathbb{S}_u
		= \alpha \mathbf{1}^T P_{\bar{u}} P_u^{-1} \mathbf{e}_{a,u} [g_L(t) - u_a]
		+ \alpha \mathbf{1}^T P_{\bar{u}} P_u^{-1} \mathbf{e}_{b_R,u} [u_{b_L} - u_{b_R}].
\end{equation}
Recall from (iii) in \Cref{def:subcell_SBP} that the norm matrices are diagonal and satisfy $P_u = P_{\bar{u}} + P_{O_u}$.
Hence, $P_{\bar{u}} P_u^{-1} = P_u^{-1} P_u - P_u^{-1} P_{O_u} $, yielding
\begin{equation}
	\alpha \mathbf{1}^T P_{\bar{u}} P_u^{-1} \mathbf{e}_{a,u} [g_L(t) - u_a]
		= \alpha (\mathbf{e}_{a,u}^T \mathbf{1}) [g_L(t) - u_a]
		- \alpha \mathbf{1}^T P_u^{-1} P_{O_u} \mathbf{e}_{a,u} [g_L(t) - u_a].
\end{equation}
We also get $\mathbf{1}^T \mathbb{S}_v = \alpha (\mathbf{e}_{b,v}^T \mathbf{1}) [ u_{b_L} - v_b ]$.
Notably, if $1 \in \mathcal{F}$, then $\mathbf{e}_{a,u}^T \mathbf{1} = 1$ and $\mathbf{e}_{b,v}^T \mathbf{1} = 1$.
Hence, we can rewrite \cref{eq:conservation_discrete2} as
\begin{equation}\label{eq:conservation_discrete3}
	\frac{\d}{\d t} I(\mathbf{u},\mathbf{v})
		= - \alpha [ v_d - g_L(t) ] - \alpha \mathbf{1}^T P_u^{-1} P_{O_u} \mathbf{e}_{a,u} [g_L(t) - u_a]
		+ \alpha \mathbf{1}^T P_u^{-1} P_{\bar{u}} \mathbf{e}_{b_R,u} [u_{b_L} - u_{b_R}].
\end{equation}
It remains to show that the last two terms on the right-hand side of \cref{eq:conservation_discrete3} are zero.
To this end, recall from (ii) in \Cref{def:subcell_SBP} that $P_{O_u}$ is supported on the grid for $\Omega_O$, i.e., $(p_{O_u})_n = 0$ if $(x_u)_n \in \mathbf{x}_{\bar{u}}$.
We thus choose $(e_{a,u})_n = 0$ if $(x_u)_n \in \mathbf{x}_{O_u}$, i.e., the discrete projection operator $\mathbf{e}_{a,u}$ is supported only on the grid for $\Omega_{\bar{u}} = [a,b]$.
This is not a strong restriction, as it is common to include the boundary points in the grid.
In this case, $(x_u)_1 = a$ and thus $\mathbf{e}_{a,u} = [1,0,\dots,0]^T$, which satisfies the above restriction.
We consequently get $P_{O_u} \mathbf{e}_{a,u} = \mathbf{0}$.
For the same reason, we also have $P_{\bar{u}} \mathbf{e}_{b_R,u} = \mathbf{0}$ and therefore the assertion holds.
\end{proof}

\subsection{Proving discrete energy stability}
\label{sub:application_stability}

Besides being conservative, exact solutions of the continuous overset IBVP \cref{eq:LR_IBVP_scalar}  are energy-stable.
That is, \cref{eq:LR_IBVP_energy2} holds for the overset energy \cref{eq:overset_energy}.
To establish stability for the numerical solution, a similar bound to \cref{eq:LR_IBVP_energy2} is required.
To this end, a discrete version of the overset energy \cref{eq:overset_energy} for the sub-cell semi-discretization \cref{eq:LR_disc} is given by
\begin{equation}\label{eq:overset_energy_discrete}
	E(\mathbf{u},\mathbf{v})
		= \mathbf{u}^T P_u \mathbf{u}
		+ \mathbf{v}^T P_v \mathbf{v}
		- \mathbf{u}^T P_{O_u} \mathbf{u},
\end{equation}
where $\mathbf{u}^T P_u \mathbf{u} \approx \int_a^c u^2 \intd x$, $\mathbf{v}^T P_v \mathbf{v} \approx \int_b^d v^2 \intd x$, and $\mathbf{u}^T P_{O_u} \mathbf{u} \approx \int_b^c u^2 \intd x$.
Thus, to establish discrete energy stability, we need to show 
\begin{equation}\label{eq:energy_discrete5}
	\frac{\d}{\d t} E(\mathbf{u},\mathbf{v})
		= - \alpha [ v_d^2 - g_L(t)^2 ]
		- \alpha [ g_L(t) - u_a ]^2
		- \alpha [ u_{b_L} - v_b ]^2
		\leq \alpha g_L(t)^2,
\end{equation}
which is the discrete counterpart to \cref{eq:LR_IBVP_energy2}.
Notably, \cref{eq:energy_discrete5} is similar to \cref{eq:LR_IBVP_energy2} but also contains the additional terms ``$- \alpha [ g_L(t) - u_a ]^2$" and ``$- \alpha [ u_{b_L} - v_b ]^2$" that introduce a small amount of artificial dissipation \cite{svard2014review,fernandez2014review}.

\begin{lemma}\label{lem:energy}
	Let $D_u$ be a sub-cell SBP operator for $(\Omega_u,\Omega_{\bar{u}},\Omega_O)$ and let $D_v$ be an SBP operator on $\Omega_v$.
	Furthermore, let $B_{\bar{u}}$ and $B_{O_u}$ be as in \cref{eq:B_form} with $(e_{a,u})_n = 0$ if $(x_u)_n \in \mathbf{x}_{O_u}$ and $(e_{b_R,u})_n = 0$ if $(x_{u})_n \in \mathbf{x}_{\bar{u}}$.
	Then, the sub-cell semi-discretization \cref{eq:LR_disc} is energy-stable, i.e.,  \cref{eq:energy_discrete5} holds.
\end{lemma}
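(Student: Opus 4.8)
The plan is to run the discrete energy method on $E(\mathbf{u},\mathbf{v})$ from \cref{eq:overset_energy_discrete}, mirroring the continuous argument that produced \cref{eq:LR_IBVP_energy2} and reusing the algebraic bookkeeping from the proof of \Cref{lem:cons}. First I would differentiate $E$ in time; since $P_u$, $P_v$, and $P_{O_u}$ are constant and diagonal, and since property (v) in \Cref{def:subcell_SBP} gives $P_u - P_{O_u} = P_{\bar{u}}$, the overlap subtraction collapses the $\mathbf{u}$-contribution onto $P_{\bar{u}}$, yielding $\frac{\d}{\d t} E = 2\mathbf{u}^T P_{\bar{u}} \mathbf{u}_t + 2\mathbf{v}^T P_v \mathbf{v}_t$. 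Substituting the semi-discretization \cref{eq:LR_disc} then splits the right-hand side into volume terms (those carrying $D_u$, $D_v$) and SAT terms (those carrying $\mathbb{S}_u$, $\mathbb{S}_v$).

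For the volume terms I would invoke the sub-cell SBP property (iii): because $P_{\bar{u}} D_u = Q_{\bar{u}}$ with $Q_{\bar{u}} + Q_{\bar{u}}^T = B_{\bar{u}}$, the quadratic form symmetrizes as $\mathbf{u}^T P_{\bar{u}} D_u \mathbf{u} = \frac12 \mathbf{u}^T B_{\bar{u}} \mathbf{u}$, and likewise $\mathbf{v}^T P_v D_v \mathbf{v} = \frac12 \mathbf{v}^T B_v \mathbf{v}$ since $D_v$ is a standard SBP operator. Feeding in the factorizations \cref{eq:B_form} together with $B_v = \mathbf{e}_{d,v}\mathbf{e}_{d,v}^T - \mathbf{e}_{b,v}\mathbf{e}_{b,v}^T$ turns these into the clean surface expressions $u_{b_L}^2 - u_a^2$ and $v_d^2 - v_b^2$. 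Note that, unlike in \Cref{lem:cons}, this step uses only the symmetric part of the operators and therefore does not require $1 \in \mathcal{F}$.

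The SAT terms are where the support hypotheses earn their keep, and I expect this to be the main obstacle. The $\mathbf{v}$-penalty is immediate, $2\mathbf{v}^T P_v P_v^{-1}\mathbb{S}_v = 2\alpha v_b[u_{b_L} - v_b]$. For the $\mathbf{u}$-penalty I would exploit that $P_{\bar{u}}$, $P_{O_u}$, $P_u$ are diagonal and invoke the two support conditions: $(e_{a,u})_n = 0$ on $\mathbf{x}_{O_u}$ forces $P_{O_u}\mathbf{e}_{a,u} = \mathbf{0}$, hence $(p_u)_n = (p_{\bar{u}})_n$ wherever $\mathbf{e}_{a,u}$ is supported, giving $P_{\bar{u}} P_u^{-1}\mathbf{e}_{a,u} = \mathbf{e}_{a,u}$; while $(e_{b_R,u})_n = 0$ on $\mathbf{x}_{\bar{u}}$ forces $P_{\bar{u}}\mathbf{e}_{b_R,u} = \mathbf{0}$, so that (using the commutativity of diagonal matrices) $P_{\bar{u}} P_u^{-1}\mathbf{e}_{b_R,u} = P_u^{-1}(P_{\bar{u}}\mathbf{e}_{b_R,u}) = \mathbf{0}$ and the entire coupling contribution $[u_{b_L}-u_{b_R}]$ drops out of the $\bar u$-energy. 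What survives is $2\alpha u_a[g_L(t) - u_a]$. Collecting the volume and SAT contributions gives $\frac{\d}{\d t}E = -\alpha(u_{b_L}^2 - u_a^2) - \alpha(v_d^2 - v_b^2) + 2\alpha u_a[g_L(t)-u_a] + 2\alpha v_b[u_{b_L} - v_b]$.

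Finally I would complete the squares: adding and subtracting $\alpha g_L(t)^2$ and regrouping recovers exactly the right-hand side of \cref{eq:energy_discrete5}, namely $-\alpha[v_d^2 - g_L(t)^2] - \alpha[g_L(t)-u_a]^2 - \alpha[u_{b_L}-v_b]^2$. Since $\alpha > 0$, the two quadratic penalty terms are non-positive, so the bound $\frac{\d}{\d t}E \le \alpha g_L(t)^2$ follows, establishing energy stability. The only genuinely delicate bookkeeping is the SAT algebra and the completion of squares; every structural input---the symmetrization via (iii), the boundary factorizations \cref{eq:B_form}, and the support-driven cancellations---is already guaranteed by \Cref{def:subcell_SBP} and the stated hypotheses, so the argument is a direct discrete transcription of the continuous energy method.
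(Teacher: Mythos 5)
Your proposal is correct and follows essentially the same route as the paper's proof: collapse $E$ onto $P_{\bar u}$ via $P_u = P_{\bar u} + P_{O_u}$, symmetrize the volume terms with the sub-cell SBP property (iii) and the factorizations \cref{eq:B_form}, cancel the overlap SAT contributions using the support hypotheses $P_{O_u}\mathbf{e}_{a,u} = \mathbf{0}$ and $P_{\bar u}\mathbf{e}_{b_R,u} = \mathbf{0}$, and complete the squares to reach \cref{eq:energy_discrete5}. The only differences are cosmetic---you simplify $P_{\bar u}P_u^{-1}\mathbf{e}_{a,u} = \mathbf{e}_{a,u}$ before completing the square, whereas the paper completes the square first and then kills the residual terms---and your observation that this lemma, unlike \Cref{lem:cons}, does not need $1 \in \mathcal{F}$ is a nice bonus consistent with the paper.
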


\begin{proof}
Recall that $P_u = P_{\bar{u}} + P_{O_u}$, transforming \cref{eq:overset_energy_discrete} into $E(\mathbf{u},\mathbf{v}) = \mathbf{u}^T P_{\bar{u}} \mathbf{u} + \mathbf{v}^T P_v \mathbf{v}$.
Consider the temporal derivative of $E(\mathbf{u},\mathbf{v})$ and assume $\mathbf{u}$ and $\mathbf{v}$ satisfy the semi-discretization \cref{eq:LR_disc}.
We then get
\begin{equation}\label{eq:energy_discrete1}
	\frac{1}{2} \frac{\d}{\d t} E(\mathbf{u},\mathbf{v})
		= - \alpha \mathbf{u}^T P_{\bar{u}} D_u \mathbf{u}
		+ \mathbf{u}^T P_{\bar{u}} P_u^{-1} \mathbb{S}_u
		- \alpha \mathbf{v}^T P_v D_v \mathbf{v}
		+ \mathbf{v}^T P_v P_v^{-1} \mathbb{S}_v.
\end{equation}
The (sub-cell) SBP properties imply $\mathbf{u}^T P_{\bar{u}} D_u \mathbf{u} = \mathbf{u}^T B_{\bar{u}} \mathbf{u} - \mathbf{u}^T P_{\bar{u}} D_u \mathbf{u}$ and $\mathbf{v}^T P_v D_v \mathbf{v} = \mathbf{v}^T B_v \mathbf{v} - \mathbf{v}^T P_v D_v \mathbf{v}$.
Hence, \cref{eq:energy_discrete1} becomes
\begin{equation}\label{eq:energy_discrete2}
	\frac{\d}{\d t} E(\mathbf{u},\mathbf{v})
		= - \alpha \mathbf{u}^T B_{\bar{u}} \mathbf{u}
		+ 2 \mathbf{u}^T P_{\bar{u}} P_u^{-1} \mathbb{S}_u
		- \alpha \mathbf{v}^T B_v \mathbf{v}
		+ 2 \mathbf{v}^T \mathbb{S}_v.
\end{equation}
The notation in \Cref{sub:application_projections} implies $\mathbf{u}^T B_{\bar{u}} \mathbf{u} = u_{b_L}^2 - u_a^2$ and $\mathbf{v}^T B_v \mathbf{v} = v_d^2 - v_b^2$.
We also note that $P_u = P_{\bar{u}} + P_{O_u}$ allows us to rewrite $\mathbf{u}^T P_{\bar{u}} P_u^{-1} \mathbb{S}_u$ in \cref{eq:energy_discrete1} as
\begin{equation}
	\mathbf{u}^T P_{\bar{u}} P_u^{-1} \mathbb{S}_u
		= \alpha u_a [ g_L(t) - u_a ] - \alpha \mathbf{u}^T P_u^{-1} P_{O_u} \mathbf{e}_{a,u} [g_L(t) - u_a]
		+ \alpha \mathbf{u}^T P_u^{-1} P_{\bar{u}} \mathbf{e}_{b_R,u} [u_{b_L} - u_{b_R}];
\end{equation}
where the same arguments as for deriving \cref{eq:conservation_discrete3} in the proof of \Cref{lem:cons} were used.
We also have $\mathbf{v}^T P_v P_v^{-1} \mathbb{S}_v = \alpha v_b [u_b - v_b]$.
Hence, \cref{eq:energy_discrete2} is rewritten as
\begin{equation}\label{eq:energy_discrete3}
\begin{aligned}
	\frac{\d}{\d t} E(\mathbf{u},\mathbf{v})
		= & \alpha [ g_L(t)^2 - v_d^2 ]
		- \alpha [ g_L(t) - u_a ]^2
		- \alpha [ u_{b_L} - v_b ]^2 \\
		& - 2 \alpha \mathbf{u}^T P_u^{-1} P_{O_u} \mathbf{e}_{a,u} [g_L(t) - u_a]
		+ 2 \alpha \mathbf{u}^T P_u^{-1} P_{\bar{u}} \mathbf{e}_{b_R,u} [u_{b_L} - u_{b_R}].
\end{aligned}
\end{equation}
As before, $(e_{a,u})_n = 0$ if $(x_u)_n \in \mathbf{x}_{O_u}$ and $(e_{b_R,u})_n = 0$ if $(x_u)_n \in \mathbf{x}_{\bar{u}}$ imply $P_{O_u} \mathbf{e}_{a,u} = \mathbf{0}$ and $P_{\bar{u}} \mathbf{e}_{b_R,u} = \mathbf{0}$, respectively, transforming \cref{eq:energy_discrete3} into
\begin{equation}\label{eq:energy_discrete4}
	\frac{\d}{\d t} E(\mathbf{u},\mathbf{v})
		= \alpha [ g_L(t)^2 - v_d^2 ]
		- \alpha [ g_L(t) - u_a ]^2
		- \alpha [ u_{b_L} - v_b ]^2,
\end{equation}
which yields the assertion.
\end{proof}

\subsection{Non-linear considerations}
\label{sub:nonlinear}

With conservation and energy stability of the sub-cell semi-discretization in hand, it is necessary to discuss the coupling and how information is communicated at the overlap points, e.g., $b$ in the overset method.
The general formulations \cref{eq:LR_IBVP_general}, \cref{eq:LR_disc_general}, and \cref{eq:SATs} are valid for scalar or systems of equations, linear or non-linear.
The coupling of solutions between the two grids is achieved using a numerical surface flux function $\fnum( \boldsymbol{w}_L, \boldsymbol{w}_R )$, which is a function of left and right solution states.

\begin{lemma}\label{cla:fullyUpwind}
To ensure conservation and energy stability, it is necessary for a general discretization that the coupling in \cref{eq:SATs} at a sub-cell point is performed in a fully upwind manner.
\end{lemma}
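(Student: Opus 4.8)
The plan is to prove necessity by repeating the conservation computation of \Cref{lem:cons} with an \emph{arbitrary} numerical flux $\fnum$ left in place, and then isolating the single term through which the flux choice at the sub-cell point $b$ enters. Working in the scalar setting of the analysis (the systems case being analogous), I would first differentiate the compact form of the discrete overset integral, $I(\mathbf{u},\mathbf{v}) = \mathbf{1}^T P_{\bar{u}} \mathbf{u} + \mathbf{1}^T P_v \mathbf{v}$, and insert the semi-discretization with the full SATs \cref{eq:SATs}. Invoking (iii) together with $D_u\mathbf{1}=\mathbf{0}$ (valid whenever $1\in\mathcal{F}$) collapses the volume terms to the boundary contributions $\mathbf{1}^T B_{\bar{u}}\mathbf{f}_u = f(u_{b_L}) - f(u_a)$ and $\mathbf{1}^T B_v\mathbf{f}_v = f(v_d) - f(v_b)$, exactly as in the linear proof. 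The support properties of the one-sided projections---namely $P_{\bar{u}}\mathbf{e}_{b_R,u}=\mathbf{0}$, $P_{\bar{u}}\mathbf{e}_{c,u}=\mathbf{0}$, and $P_{O_u}\mathbf{e}_{a,u}=\mathbf{0}$---then annihilate every SAT term except those projected onto $\Omega_{\bar{u}}$ (through $\mathbf{e}_{a,u}$ and $\mathbf{e}_{b_L,u}$) and onto the left boundary of $\Omega_v$ (through $\mathbf{e}_{b,v}$), together with the physical right-boundary term.

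Collecting what survives, I expect every local-flux value $f(u_a)$, $f(u_{b_L})$, $f(v_b)$, $f(v_d)$ to cancel telescopically, leaving
\begin{equation*}
	\frac{\d}{\d t} I(\mathbf{u},\mathbf{v})
		= \fnum(g_L, u_a) - \fnum(v_d, g_R)
		+ \bigl[ \fnum(u_{b_L}, v_b) - \fnum(u_{b_L}, u_{b_R}) \bigr].
\end{equation*}
The first two terms are precisely the physical boundary fluxes appearing in \cref{eq:conservation_discrete4}, so discrete conservation holds if and only if the bracketed sub-cell coupling term vanishes for all admissible data. The decisive point is that $u_{b_R}$ and $v_b$ are approximations to the solution at the same point $b$ but computed on two \emph{different} grids, hence genuinely independent degrees of freedom; the two fluxes in the bracket share the identical upwind argument $u_{b_L}$ and differ only in their downwind argument. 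Consequently the bracket vanishes identically if and only if $\fnum(u_{b_L},\cdot)$ does not depend on its second argument---that is, the coupling is fully upwind, $\fnum(w_L,w_R)=f(w_L)$ for $\alpha>0$. Any flux retaining genuine downwind dependence admits states with $v_b\neq u_{b_R}$ for which the bracket is nonzero and conservation fails, which establishes necessity.

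For energy stability I would run the analogous computation of \Cref{lem:energy} with a general flux and track the sub-cell point: the $v$-equation contributes a term weighted by $v_b$ built from $\fnum(u_{b_L},v_b)$, while the $u$-equation contributes one weighted by $u_{b_L}$ built from $\fnum(u_{b_L},u_{b_R})$. Completing these into the sign-definite dissipation $-\alpha[u_{b_L}-v_b]^2$ of \cref{eq:energy_discrete4} is possible only when both fluxes collapse to the common upwind value $f(u_{b_L})=\alpha u_{b_L}$; otherwise an indefinite cross term proportional to $\fnum(u_{b_L},v_b)-\fnum(u_{b_L},u_{b_R})$ survives and the energy bound is lost. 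I expect the main obstacle to be conceptual rather than computational: one must argue cleanly that $u_{b_R}$ and $v_b$ are independent---they coincide only in non-generic configurations---so that no cancellation can be manufactured through a two-argument flux, and one must pin down what ``fully upwind'' means away from the scalar linear case, where it is simply fixed by the sign of $\alpha$.
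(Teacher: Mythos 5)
Your proposal is correct and follows essentially the same route as the paper's proof: discretely integrate each domain against $\mathbf{1}^T P_{\bar{u}}$ and $\mathbf{1}^T P_v$, observe that all flux contributions cancel except the residual sub-cell term $\fnum(\boldsymbol{u}_{b_L},\boldsymbol{v}_b)-\fnum(\boldsymbol{u}_{b_L},\boldsymbol{u}_{b_R})$, and conclude from the independence of $\boldsymbol{v}_b$ and $\boldsymbol{u}_{b_R}$ that this vanishes for all data only for a fully upwind flux. The only cosmetic differences are that the paper suppresses the physical boundary terms via periodic boundary conditions where you carry them along explicitly, and that you sketch the energy-stability half in more detail than the paper's ``the loss of provable energy stability is similar'' remark.
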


\begin{proof}
Consider an overset discretization like that given in \Cref{fig:overlap_1d}, where there are two domains, with one mesh for the $u$ solution mesh and another mesh for the $v$ solution mesh.
For simplicity, we only discuss how conservation may be lost---the loss of provable energy stability is similar---and assume only right-travelling waves are present in the domain---so the coupling at the overlap point $b$ plays the crucial role.
Furthermore, we consider periodic boundary conditions such that the physical boundaries play no role in the conservation properties.
We discretely integrate over each domain by multiplying the two discretizations in \cref{eq:LR_IBVP_general} with $\mathbf{1}^T P_{\bar{u}}$ and $\mathbf{1}^T P_v$, respectively.
Adding the two discretely integrated quantities, nearly all of the flux contributions cancel from the form of the coupling terms in \cref{eq:SATs}.
What remains in the discrete conservation statement is
\begin{equation}
\mathbf{1}^T P_{\bar{u}} \mathbf{u}_t + \mathbf{1}^TP_v \mathbf{v}_t + \fnum( \boldsymbol{u}_{b_L}, \boldsymbol{v}_b ) - \fnum( \boldsymbol{u}_{b_L}, \boldsymbol{u}_{b_R} ) = 0,
\end{equation}
where $\boldsymbol{v}_b = \mathbf{e}_{b,v}^T \mathbf{v} \approx \boldsymbol{v}(b)$, $\boldsymbol{u}_{b_L} = \mathbf{e}_{b_L,u}^T \mathbf{u} \approx \boldsymbol{u}(b)$, $\boldsymbol{u}_{b_R} = \boldsymbol{e}_{b_R,u}^T \mathbf{u} \approx \boldsymbol{u}(b)$.
To allow a general discussion for systems of equations, we use a slight abuse of notation and the projected quantities from the left and right remain bold.
In general, the solutions on each mesh projected from the right, i.e., $\boldsymbol{v}_b$ and $\boldsymbol{u}_{b_R}$ are not the same.
However, the same solution value projected from the left, $\boldsymbol{u}_{b_L}$ is used as the first argument in the remaining numerical flux terms above.
Thus, conservation only holds provided $\fnum( \boldsymbol{u}_{b_L}, \boldsymbol{v}_b ) = \fnum( \boldsymbol{u}_{b_L}, \boldsymbol{u}_{b_R} ) = f(\boldsymbol{u}_{b_L})$.
This condition is satisfied exclusively for a fully upwind flux.
\end{proof}

Note that from the proof it follows that a fully upwind flux is only necessary at a sub-cell point. All other interfaces can use any numerical flux.
The coupling in \cref{eq:LR_disc} for the linear advection equation is done in a fully upwind way.
We will numerically demonstrate the importance of fully upwind sub-cell coupling, particularly for non-linear problems, in \Cref{sec:compEuler}.

\subsection{Extension to multi-block/-element overset domain methods}
\label{sub:application_multi}

We now outline how to incorporate the proposed sub-cell SBP operators into multi-block/-element overset domain methods, thereby enabling locally conservative and energy-stable discontinuous Galerkin spectral element methods (DGSEMs) for overset-domain problems.
The domains $\Omega_u$ and $\Omega_v$ are each partitioned into disjoint elements, with every element equipped with a discrete derivative operator and coupled via SATs (see \Cref{fig:schematic_multiBlock}).
The resulting multi-element SBP-SAT discretization retains the same conservation and energy stability properties as in the single-block setting.
Crucially, only the elements containing the points $b$ and $c$ require the novel sub-cell SBP operators; all other elements can use conventional SBP operators.
Furthermore, for the linear advection equation with positive velocity, only the element containing the point $b$ needs to be equipped with a sub-cell SBP operator.

\begin{figure}[tb]
\centering
\resizebox{.99\textwidth}{!}{%
\begin{tikzpicture}[scale=1.2]

	\coordinate (a) at (0,0);
	\coordinate (c) at (8,0);
	\coordinate (u_overlap_1) at (2,0);
	\coordinate (u_overlap_2) at (4,0);
	\coordinate (u_overlap_3) at (6,0);

	\draw[thick, black] (a) -- (c);

	\foreach \x in {0,2,4,6,8} {
    	\draw[thick] (\x,-0.2) -- (\x,0.2);
	}

	\node[below left] at (a) {$a$};
	\node[below right, red] at (c) {$c$};
	\node[below] at ($(a)!0.5!(u_overlap_1)$) {$\Omega_{u,1}$};
	\node[below] at ($(u_overlap_1)!0.5!(u_overlap_2)$) {$\Omega_{u,2}$};
	\node[below, blue] at ($(u_overlap_2)!0.5!(u_overlap_3)$) {$\Omega_{u,3}$};
	\node[below] at ($(u_overlap_3)!0.5!(c)$) {$\Omega_{u,4}$};

	\fill[pattern={Lines[
       					distance=2mm,
                  		angle=45,
                  		line width=0.5mm
                 		]},
		pattern color=blue] ($(u_overlap_2)-(0,-0.1)$) rectangle ($(u_overlap_3)+(0,-0.1)$);

	\coordinate (b) at (5,1);
	\coordinate (d) at (11,1);
	\coordinate (v_overlap_start) at (7,1);
	\coordinate (v_overlap_end) at (9,1);

	\draw[thick, black] (b) -- (d);

	\foreach \x in {5,7,9,11}{
    		\draw[thick] (\x,0.8) -- (\x,1.2);
	}

	\node[above left, blue] at (b) {$b$};
	\node[above right] at (d) {$d$};
	\node[above] at ($(b)!0.5!(v_overlap_start)$) {$\Omega_{v,1}$};
	\node[above, red] at ($(v_overlap_start)!0.5!(v_overlap_end)$) {$\Omega_{v,2}$};
	\node[above] at ($(v_overlap_end)!0.5!(d)$) {$\Omega_{v,3}$};

	\fill[pattern={Lines[
       					distance=2mm,
                  		angle=45,
                  		line width=0.5mm
                 		]},
		pattern color=red] ($(v_overlap_start)-(0,-0.1)$) rectangle ($(v_overlap_end)+(0,-0.1)$);

	\draw[thick, blue, dashed] (b) -- ($(u_overlap_2)!0.5!(u_overlap_3)$);
	\draw[thick, red, dashed] (c) -- ($(v_overlap_start)!0.5!(v_overlap_end)$);

\end{tikzpicture}
}%
\caption{
  	Schematic illustration of a multi-element grid for the two sub-domains $\Omega_u$ and $\Omega_v$ of the overset domain.
	The sub-domains are partitioned into four and three non-overlapping elements, respectively.
	Each sub-domain contains exactly one element that includes a boundary point of the other sub-domain: $\Omega_{u,3}$ (blue diagonal lines) contains $b$, while $\Omega_{v,2}$ (red diagonal lines) contains $c$.
	We propose to use sub-cell SBP operators in these two elements only.
	All other elements are equipped with a usual SBP operator.
}
\label{fig:schematic_multiBlock}
\end{figure}
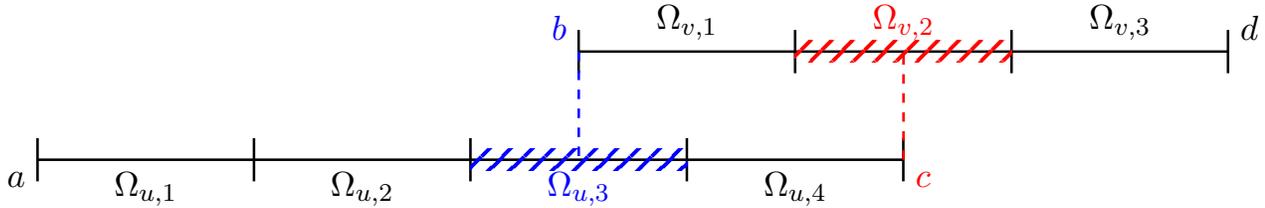
\section{Existence and construction}
\label{sec:existence}

We next establish the existence and construction of the proposed sub-cell SBP operator.
Our main result in \Cref{thm:main} states that a sub-cell SBP operator exists if and only if there exist SBP operators on the sub-cells.
\Cref{thm:main} subsequently tells us how to construct sub-cell SBP operators.
Finally, we present two illustrative examples of sub-cell SBP operators in \Cref{sub:operators_examples}.

We consider a function space $\mathcal{F} \subset C^1(\Omega_{\rm ref})$ with basis $\{f_k\}_{k=1}^K$ and use the same notation as in \Cref{sec:operators}.
Furthermore, we denote by $V = [ \mathbf{f}_1, \dots, \mathbf{f}_K ] \in \R^{N \times K}$ and $V' = [ \mathbf{f}'_1, \dots, \mathbf{f}'_K ] \in \R^{N \times K}$ the associated Vandermonde matrices for the nodal values of the basis elements and their derivatives at the grid points.

\subsection{Existence and construction of sub-cell SBP operators}
\label{sub:operators_existence}

Consider the reference element $\Omega_{\rm ref} = [\omega_L, \omega_R]$ with sub-cells $\Omega_L = [\omega_L, \omega_M]$ and $\Omega_R = [\omega_M, \omega_R]$.
Let $\mathbf{x} \in \R^N$, $\mathbf{x}_L \in \R^{N_L}$, and $\mathbf{x}_R \in \R^{N_R}$ be nodes on $\Omega_{\rm ref}$, $\Omega_L$ and $\Omega_R$, respectively, where $\mathbf{x} = [\mathbf{x}_L, \mathbf{x}_R]$.

\begin{theorem}\label{thm:main}
	Let the boundary operators $B_L$ and $B_R$ be as in \cref{eq:single_construction_B_form}.
	The discrete derivative operator $D = P^{-1}( S + B/2 ) \in \R^{(N_L + N_R) \times (N_L + N_R)}$ is an $\mathcal{F}$-exact sub-cell SBP operator for $(\Omega_{\rm ref}, \Omega_L, \Omega_R)$ if and only if
	(a) $D = \diag( D_L^{1,1}, D_R^{2,2} )$, $P = \diag( P_L^{1,1}, P_R^{2,2} )$, $S = \diag( S_L^{1,1}, S_R^{2,2} )$, $B = \diag( B_L^{1,1}, B_R^{2,2} )$, and
	(b) $D_L^{1,1} = ( P_L^{1,1} )^{-1}( S_L^{1,1} + B_L^{1,1} / 2 ) \in \R^{N_L \times N_L}$ and $D_R^{2,2} = ( P_R^{2,2} )^{-1}( S_R^{2,2} + B_R^{2,2} / 2 ) \in \R^{N_R \times N_R}$ are $\mathcal{F}$-exact SBP operators on $\Omega_L$ and $\Omega_R$, respectively.
\end{theorem}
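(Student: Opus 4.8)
The plan is to work throughout in the block decomposition induced by the index partition $\{1,\dots,N_L\}$ (the $L$-block, superscript $1$) and $\{N_L+1,\dots,N\}$ (the $R$-block, superscript $2$), so that every $N\times N$ matrix $M$ is written as $M = \begin{pmatrix} M^{1,1} & M^{1,2} \\ M^{2,1} & M^{2,2}\end{pmatrix}$. The crucial structural input is that, by (ii) in \Cref{def:subcell_SBP}, $P_L = \diag(P_L^{1,1}, 0)$ and $P_R = \diag(0, P_R^{2,2})$ with $P_L^{1,1}, P_R^{2,2}$ diagonal and strictly positive (hence invertible), while the assumed form \cref{eq:single_construction_B_form} places $B_L$ entirely in the $(1,1)$ block and $B_R$ entirely in the $(2,2)$ block. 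I would stress that this decomposition is by index, so the potential double node at $\omega_M$ (when $(x_L)_{N_L} = (x_R)_1$) causes no difficulty: index $N_L$ and index $N_L+1$ are distinct entries of $\mathbf{x}$ even when they coincide physically, the former carrying positive $L$-weight and the latter positive $R$-weight.

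For the forward implication, suppose $D$ is an $\mathcal{F}$-exact sub-cell SBP operator. First I would compute $Q_L = P_L D$ in block form, namely $\begin{pmatrix} P_L^{1,1} D^{1,1} & P_L^{1,1} D^{1,2} \\ 0 & 0 \end{pmatrix}$. Imposing the sub-cell symmetry $Q_L + Q_L^T = B_L$ from (iii) and comparing the $(1,2)$ block against the vanishing $(1,2)$ block of $B_L$ yields $P_L^{1,1} D^{1,2} = 0$; since $P_L^{1,1}$ is invertible, this forces $D^{1,2} = 0$. The symmetric computation with $Q_R = P_R D$ and $Q_R + Q_R^T = B_R$ gives $D^{2,1} = 0$. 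Hence $D$, and therefore $P = P_L + P_R$, $Q = PD$, $B = B_L + B_R$, and $S = Q - B/2$, are all block diagonal, which is claim (a). It then remains to read off (b): exactness (i) restricted to the first $N_L$ indices gives $D^{1,1}\mathbf{f}^{(L)} = (\mathbf{f}')^{(L)}$ for all $f\in\mathcal{F}$, i.e. $D_L^{1,1} := D^{1,1}$ is $\mathcal{F}$-exact on $\Omega_L$; the $(1,1)$ block of $Q_L + Q_L^T = B_L$ is precisely the SBP relation $P_L^{1,1} D_L^{1,1} + (D_L^{1,1})^T P_L^{1,1} = B_L^{1,1}$; positivity of $P_L^{1,1}$ comes from (ii); and restricting (iv) to $L$-indexed test vectors gives the boundary exactness of $B_L^{1,1}$. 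The same argument on the $R$-block shows $D_R^{2,2}$ is an $\mathcal{F}$-exact SBP operator on $\Omega_R$.

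The reverse implication is a direct verification. Assuming (a) and (b), I would check the five conditions of \Cref{def:subcell_SBP} by block-diagonal algebra, each of which decouples into the corresponding single-cell SBP axiom: (i) follows from the block-wise exactness of $D_L^{1,1}$ and $D_R^{2,2}$; (ii) from the positive diagonal form of the norm blocks; (iii) because $Q_L := P_L D = \diag(P_L^{1,1} D_L^{1,1}, 0)$ satisfies $Q_L + Q_L^T = \diag(B_L^{1,1}, 0) = B_L$ by the single-cell SBP relation, and likewise for $R$; (iv) from the block-wise boundary exactness together with the block localization of $B_{L/R}$; and (v) holds by the defining block sums.

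I expect the main obstacle to be the forward deduction that the off-diagonal blocks of $D$ vanish: this is the step that genuinely uses the sub-cell SBP structure rather than merely rephrasing it, and it hinges on coupling the strictly diagonal, sub-cell-supported norms $P_{L/R}$ with the block-localized boundary operators of \cref{eq:single_construction_B_form}. A secondary point requiring care is the equivalence between $\mathcal{F}$-exactness of the full operator and of its diagonal blocks, which is immediate at the level of nodal values but should be phrased so that the relevant function space on $\Omega_L$ is understood as the restriction $\mathcal{F}|_{\Omega_L}$ (and analogously on $\Omega_R$).
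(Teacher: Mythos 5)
Your proof is correct, and its skeleton---block decomposition by index, blockwise use of the sub-cell SBP relations for the forward direction, lift-and-verify for the reverse---is the same as the paper's; your reverse implication is essentially verbatim the paper's. The genuine difference is how the block-localization of $B_L$ and $B_R$ enters the forward direction. You \emph{assume} it: you read \cref{eq:single_construction_B_form} together with the standing restrictions of \Cref{sub:boundaryOp} (one-sided $\mathbf{e}_{M_L}$, $\mathbf{e}_{M_R}$, and $\mathbf{e}_{L} = [1,0,\dots,0]^T$, $\mathbf{e}_{R} = [0,\dots,0,1]^T$) as placing $B_L$ entirely in the $(1,1)$ block and $B_R$ in the $(2,2)$ block, after which the $(1,2)$ block of $Q_L + Q_L^T = B_L$ immediately yields $P_L^{1,1} D^{1,2} = 0$, hence $D^{1,2} = 0$ (and symmetrically $D^{2,1} = 0$). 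The paper instead \emph{derives} this localization in an intermediate lemma, \Cref{cor:structure_S}: writing $S_L = P_L D - B_L/2$, it plays the skew-symmetry of $S_L$ against the symmetry of $B_L$ to force $B_L^{2,2} = 0$, concludes from the outer-product form \cref{eq:single_construction_B_form} that $\mathbf{e}_{L}$ and $\mathbf{e}_{M_L}$ must then be supported on $\Omega_L$, and only then obtains the vanishing off-diagonal blocks of $B_L$ and $S_L$. Consequently, the paper's forward implication shows that the sub-cell SBP axioms themselves enforce the one-sided support of the projection operators, whereas your version takes that one-sidedness as a hypothesis and therefore proves correspondingly less: it would not exclude a putative sub-cell SBP operator built from two-sided projection vectors. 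Under the paper's stated conventions this costs nothing---the restriction is declared as standing in \Cref{sub:boundaryOp}, so your argument is a valid and somewhat more direct proof of the theorem as stated (and your explicit index-based treatment of the duplicated node at $\omega_M$ is a point the paper leaves implicit)---but the structural content of \Cref{cor:structure_S} is not recovered by your route.
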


\begin{proof}
	See \Cref{app:proofs}.
\end{proof}

\Cref{thm:main} establishes that a sub-cell SBP operator (defined in \Cref{def:subcell_SBP}) exists if and only if SBP operators exist on the individual sub-cells.
We next present a straightforward procedure for constructing $\mathcal{F}$-exact sub-cell SBP operators.
As established in \Cref{thm:main}, such operators can be built by first designing SBP operators on the sub-cells $\Omega_L$ and $\Omega_R$, and then assembling them into a global operator.

This observation motivates the following construction approach:
\begin{enumerate}
	\item
	Construct $\mathcal{F}$-exact SBP operators $\tilde{D}_{L/R} = \tilde{P}_{L/R}^{-1} ( \tilde{S}_{L/R} + \tilde{B}_{L/R}/2 ) \in \R^{N_{L/R} \times N_{L/R}}$ on $\Omega_{L/R}$ with sub-cell nodes $\mathbf{x}_{L/R} \in \R^{N_{L/R}}$.

	\item
	Get an $\mathcal{F}$-exact sub-cell SBP operator $D = P^{-1}( S + B/2 ) \in \R^{N}$ with $N = N_L + N_R$ by assembling $\tilde{D}_{L/R}$.
	Specifically, choose
	$D = \diag( \tilde{D}_L, \tilde{D}_R )$,
	$P_L = \diag( \tilde{P}_L, 0 )$,
	$P_R = \diag( 0, \tilde{P}_R )$,
	$P = P_L + P_R$;
	$S_L = \diag( \tilde{S}_L, 0 )$,
	$S_R = \diag( 0, \tilde{S}_R )$,
	$S = S_L + S_R$; and
	$B_L = \diag( \tilde{B}_L, 0 )$,
	$B_R = \diag( 0, \tilde{B}_R )$,
	$B = B_L + B_R$.

\end{enumerate}
The first step requires constructing $\mathcal{F}$-exact SBP operators on $\Omega_L$ and $\Omega_R$.
Numerous works have addressed the construction of such operators: see \cite{fernandez2014generalized,svard2014review,fernandez2014review} for polynomial-based SBP operators, and \cite{glaubitz2022summation,glaubitz2024optimization} for FSBP operators based on general function spaces.
We present some illustrative examples of the resulting sub-cell SBP operators below in \Cref{sub:operators_examples}.

\begin{remark}\label{rem:coupling} 
	Coupling procedures similar to the one proposed above have previously been outlined for \emph{non}-overlap domain settings in \cite{chan2018discretely,chan2019efficient,del2019extension} and \cite{lundquist2022multi} using polynomial SBP operators combined with central numerical fluxes and inner-product-preserving interface treatment, respectively. 
	In addition, \cite{ranocha2021broad} considered the same strategy as in \cite{chan2018discretely,chan2019efficient,del2019extension} for polynomial upwind SBP operators \cite{mattsson2017diagonal,glaubitz2025generalized} together with fully upwind fluxes---again for \emph{non}-overlap problems. 
	Although upwind operators were not included in our present discussion, these earlier coupling approaches can be viewed as special cases of our general sub-cell SBP framework, which accommodates arbitrary FSBP operators and---at least formally---a broad class of numerical fluxes in the overset domain context.
\end{remark}

\subsection{On the boundary operators}
\label{sub:boundaryOp}

We start by addressing the existence and construction of $B_L$ and $B_R$ satisfying (iv) in \Cref{def:subcell_SBP}.
We then get $B$ as $B = B_L + B_R$.
Recall from \Cref{sec:application} that it is desirable to express the boundary operators as
\begin{equation}\label{eq:single_construction_B_form}
	B_L = \mathbf{e}_{M_L} \mathbf{e}_{M_L}^T - \mathbf{e}_{L} \mathbf{e}_{L}^T, \quad
	B_R = \mathbf{e}_{R} \mathbf{e}_{R}^T - \mathbf{e}_{M_R} \mathbf{e}_{M_R}^T,
\end{equation}
where $\mathbf{e}_{L}, \mathbf{e}_{M_L}, \mathbf{e}_{M_R}, \mathbf{e}_{R} \in \R^N$ are discrete projection operators satisfying $\mathbf{e}_{L}^T \mathbf{f} \approx f(\omega_L)$, $\mathbf{e}_{M_L}^T \mathbf{f} \approx f(\omega_M)$, $\mathbf{e}_{M_R}^T \mathbf{f} \approx f(\omega_M)$, and $\mathbf{e}_{R}^T \mathbf{f} \approx f(\omega_R)$.
The operators $B_L$ and $B_R$ in \cref{eq:single_construction_B_form} satisfy the exactness conditions (iv) in \Cref{def:subcell_SBP} if and only if the projection operators $\mathbf{e}_{L}, \mathbf{e}_{M_L}, \mathbf{e}_{M_R}, \mathbf{e}_{R}$ satisfy
\begin{equation}\label{eq:projections_exactness}
	\mathbf{e}_{L}^T \mathbf{f} = f(\omega_L), \quad
	\mathbf{e}_{M_L}^T \mathbf{f} = f(\omega_M), \quad
	\mathbf{e}_{M_R}^T \mathbf{f} = f(\omega_M), \quad
	\mathbf{e}_{R}^T \mathbf{f} = f(\omega_R), \quad
	\forall f \in \mathcal{F}.
\end{equation}
That is, the projection operators must be exact for all vectors corresponding to the nodal values of functions within the function space for which the sub-cell SBP is exact.

For simplicity, we assume that $x_1 = \omega_L$ and $x_N = \omega_R$.
Under this assumption, $\mathbf{e}_{L} = [1,0,\dots,0]^T$ and $\mathbf{e}_{R} = [0,\dots,0,1]^T$ satisfy \cref{eq:projections_exactness}.
This choice also satisfies the assumption that $\mathbf{e}_{L}$ is supported only on the nodes in $\Omega_L = [\omega_L,\omega_M]$, which allowed us to establish conservation and energy stability in \Cref{sec:application}.

Notably, the one-sided projection operators $\mathbf{e}_{M_L}$ and $\mathbf{e}_{M_R}$ generally differ from each other.
Henceforth, we restrict $\mathbf{e}_{M_L}$ and $\mathbf{e}_{M_R}$ to be supported on nodes $\mathbf{x}_L \in \Omega_L^{N_L}$ and $\mathbf{x}_R \in \Omega_R^{N_R}$, respectively. 
That is, $(\mathbf{e}_{M_L})_n = 0$ if $x_n \in \mathbf{x}_{R}$ and $(\mathbf{e}_{M_R})_n = 0$ if $x_n \in \mathbf{x}_{L}$.
Recall that this restriction allowed us to prove conservation and energy stability in \Cref{sec:application}.
It is furthermore leveraged in \Cref{cor:structure_S} to prove \Cref{thm:main} and therefore the existence of the desired sub-cell SBP operators.

We introduce some additional notation to ensure the existence of the projection operators $\mathbf{e}_{M_L}$ and $\mathbf{e}_{M_R}$.
Let $\mathbf{x}_L \in \R^{N_L}$ and $\mathbf{x}_R \in \R^{N_R}$ be the grid points in $\Omega_L$ and $\Omega_R$, respectively.
We define the \emph{sub-cell Vandermonde matrices} $V_L \in \R^{N_L \times K}$ and $V_R \in \R^{N_R \times K}$ as $V_L = [f_1(\mathbf{x}_L), \dots, f_K(\mathbf{x}_L)]$ and $V_R = [f_1(\mathbf{x}_R), \dots, f_K(\mathbf{x}_R)]$.
The projection operators $\mathbf{e}_{M_L}$ and $\mathbf{e}_{M_R}$ exist as long as $V_L$ and $V_R$ have linearly independent columns, respectively.
To see this, note that the exactness conditions for $\mathbf{e}_{M_L}$ and $\mathbf{e}_{M_R}$ in \cref{eq:projections_exactness} can be rewritten as the linear systems $\mathbf{e}_{M_L}^T V_L = [f_1(\omega_M), \dots, f_K(\omega_M)]$ and $\mathbf{e}_{M_R}^T V_R = [f_1(\omega_M), \dots, f_K(\omega_M)]$---if $\mathbf{e}_{M_L}$ and $\mathbf{e}_{M_R}$ are supported on $\mathbf{x}_L$ and $\mathbf{x}_R$, respectively.
The above linear systems have a solution for $\mathbf{e}_{M_L}$ and $\mathbf{e}_{M_R}$ if $V_L$ and $V_R$ have linearly independent columns.

\begin{example}
	Assume we have at least $K$ points in both $\Omega_L$ and $\Omega_R$, i.e., $N_L \geq K$ and $N_R \geq K$.
	Let us consider two cases for $N_L$:
	\begin{enumerate}
		\item
		If $N_L = K$, then $V_L$ is invertible and there exists a unique $\mathbf{e}_{M,L}$ given by $\mathbf{e}_{M,L} = [ \tilde{\mathbf{e}}_{M,L}; \mathbf{0} ] \in \R^{N}$ with $\tilde{\mathbf{e}}_{M,L}^T = [f_1(\omega_M), \dots, f_K(\omega_M)] V_L^{-1}  \in \R^{N_L}$ and $\mathbf{0} \in \R^{N_R}$.
		This corresponds to approximating $u(\omega_M)$ by the Langrange interpolation polynomial $L(\omega_M) = \sum_{k=1}^{K} u_k \ell_k(\omega_M)$ on $\Omega_L$, where $\ell_k$ is the $k$th Lagrange basis function satisfying the cardinal property $\ell_k((x_L)_n) = 1$ if $k=n$ and $\ell_k((x_L)_n) = 0$ otherwise, resulting in $\tilde{\mathbf{e}}_{M,L}^T  = [\ell_1(\omega_M), \dots, \ell_N(\omega_M)]$.
		Notably, if $(x_L)_{N_L} = \omega_M$, then $\tilde{\mathbf{e}}_{M,L}^T = [0,\dots,0,1]$.

		\item
		If $N_L > K$, then $\mathbf{e}_{M,L}^T V_L = [f_1(\omega_M), \dots, f_K(\omega_M)]$ is underdetermined with infinitely many solutions.
		In this case, it is common practice \cite{glaubitz2020stable,glaubitz2023multi} to aim for the least squares solution with minimal $\ell^2$-norm.
		This corresponds to approximating $u(\omega_M)$ by the least squares approximation $f(\omega_M) = \sum_{k=1}^K \langle \mathbf{f}_k, \mathbf{u} \rangle f_k(\omega_M)$, where $\langle f, g \rangle = \sum_{n=1}^{N_L} f((x_L)_n) g((x_L)_n)$ is a discrete inner product and $\{f_k\}_{k=1}^K$ is an orthonormal basis of $\mathcal{F}$, resulting in $\tilde{\mathbf{e}}_{M,L}^T = [f_1(\omega_M), \dots, f_K(\omega_M)] V_L^T$.
	\end{enumerate}
	The above strategies can also be used to determine $\mathbf{e}_{M,R}$.
\end{example}

\subsection{Examples of sub-cell SBP operators}
\label{sub:operators_examples}

We present two illustrative examples on how to assemble polynomial SBP operators on Gauss--Lobatto (\Cref{expl:operator1}) and Gauss--Radau (\Cref{expl:operator2}) points to obtain sub-cell SBP operators.
We denote by $\mathcal{P}_d$ the space of all polynomials up to degree $d$.

\begin{example}\label{expl:operator1}
	Consider $\Omega_{\rm ref} = [-1,1]$ and let $\omega_M = 0$, so that $\Omega_L = [-1,0]$ and $\Omega_R = [0,1]$.
	Our goal is to construct a $\mathcal{P}_1$-exact sub-cell SBP operator on $\Omega_{\rm ref}$, where $\mathcal{P}_1 = \operatorname{span}\{1,x\}$.
	To get the desired $\mathcal{P}_1$-exact sub-cell SBP operator on $\Omega_{\rm ref}$, we first construct $\mathcal{P}_1$-exact SBP operators on $\Omega_{L}$ and $\Omega_{R}$.
	We get these using the Gauss--Lobatto quadrature on $\Omega_{L}$ and $\Omega_{R}$, respectively.
	The corresponding points and weights are $(x_L)_1 = -1$, $(x_L)_2 = 0$, $(x_R)_1 = 0$, $(x_R)_2 = 1$ and $(p_L)_1 = 1/2$, $(p_L)_2 = 1/2$, $(p_R)_1 = 1/2$, $(p_R)_2 = 1/2$; see \cite{abramowitz1972handbook}.
	Notably, the point $\omega_M = 0$ is contained in both $\mathbf{x}_L$ and $\mathbf{x}_R$.
	Furthermore, recall that the Gauss--Lobatto quadrature with $n$ points is $\mathcal{P}_{2n-3}$-exact.
	Hence, the Gauss--Lobatto quadrature is $\mathcal{P}_1$-exact for $n=2$ and we get $\mathcal{P}_1$-exact SBP operators $\tilde{D}_{L/R}$ on $\Omega_{L}$ and $\Omega_{R}$---by forming the Lagrange basis polynomials on $\mathbf{x}_{L/R}$ and evaluating their derivatives at the same points.
	At the same time, we get the projection operators by evaluating the same Lagrange basis polynomials at the respective boundary points.
	Assembling $\tilde{D}_{L/R}$ according to the construction procedure in \Cref{sub:operators_existence}, we finally get the desired $\mathcal{P}_1$-exact sub-cell SBP operator $D = P^{-1}( S + B/2 )$ with $\mathbf{x} = [ -1, 0, 0, 1 ]$ and
	\begin{equation}\label{eq:expl1}
	\begin{aligned}
		& P = \diag( 1/2, 1/2, 1/2, 1/2 ), \quad
		&& B = \diag\left(
			\begin{bmatrix}
				-1 & 0 \\
				0 & 1
			\end{bmatrix},
			\begin{bmatrix}
				-1 & 0 \\
				0 & 1
			\end{bmatrix}
		\right), \\
		& S = \frac{1}{2} \diag\left(
			\begin{bmatrix}
				0 & 1 \\
				-1 & 0
			\end{bmatrix},
			\begin{bmatrix}
				0 & 1 \\
				-1 & 0
			\end{bmatrix}
		\right), \quad
		&& D = \diag\left(
			\begin{bmatrix}
				-1 & 1 \\
				-1 & 1
			\end{bmatrix},
			\begin{bmatrix}
				-1 & 1 \\
				-1 & 1
			\end{bmatrix}
		\right).
	\end{aligned}
	\end{equation}
	We can similarly construct $\mathcal{P}_d$-exact sub-cell SBP operators for other values of $d$ using higher-order Gauss--Lobatto quadratures.
	Some of these can be found in our reproducibility repository.
\end{example}

\begin{example}\label{expl:operator2}
	Consider the same set-up and goal as in \Cref{expl:operator1}.
	This time, we use the left- and right-sided Gauss--Radau quadrature on $\Omega_L$ and $\Omega_R$, respectively, to construct the desired $\mathcal{P}_1$-exact sub-cell SBP operator on $\Omega_{\rm ref}$.
	The corresponding points and weights are $(x_L)_1 = -1$, $(x_L)_2 = -1/3$, $(x_R)_1 = 1/3$, $(x_R)_2 = 1$ and $(p_L)_1 = 1/4$, $(p_L)_2 = 3/4$, $(p_R)_1 = 3/4$, $(p_R)_2 = 1/4$; see \cite{abramowitz1972handbook}.
	Hence, this time, $\omega_M = 0$ is neither contained in $\mathbf{x}_L$ nor in $\mathbf{x}_R$.
	Furthermore, the Gauss--Radau quadrature with $n$ points is $\mathcal{P}_{2n-2}$-exact, positioning it between the Gauss--Lobatto (both end points included) and Gauss--Legendre (both end points excluded) quadratures, which are $\mathcal{P}_{2n-3}$-exact and $\mathcal{P}_{2n-1}$-exact, respectively.
	Hence, the Gauss--Radau quadrature is $\mathcal{P}_2$-exact for $n=2$ and we get $\mathcal{P}_1$-exact SBP operators $\tilde{D}_{L/R}$ on $\Omega_{L}$ and $\Omega_{R}$---by again forming the Lagrange basis polynomials on $\mathbf{x}_{L/R}$ and evaluating their derivatives at the same points.
	This time, assembling $\tilde{D}_{L/R}$ according to the construction procedure in \Cref{sub:operators_existence}, we get the $\mathcal{P}_1$-exact sub-cell SBP operator $D = P^{-1}( S + B/2 )$ with $\mathbf{x} = [ -1, -1/3, 1/3, 1 ]$ and
	\begin{equation}\label{eq:expl2}
	\begin{aligned}
		& P = \diag( 1/4, 3/4, 3/4, 1/4 ), \quad
		&& B = \frac{3}{4}\diag\left(
			\begin{bmatrix}
				-1 & -1 \\
				-1 & 3
			\end{bmatrix},
			\begin{bmatrix}
				-3 & 1 \\
				1 & 1
			\end{bmatrix}
		\right), \\
		& S = \frac{3}{4}\diag\left(
			\begin{bmatrix}
				0 & 1 \\
				-1 & 0
			\end{bmatrix},
			\begin{bmatrix}
				0 & 1 \\
				-1 & 0
			\end{bmatrix}
		\right), \quad
		&& D = \frac{3}{2}\diag\left(
			\begin{bmatrix}
				-1 & 1 \\
				-1 & 1
			\end{bmatrix},
			\begin{bmatrix}
				-1 & 1 \\
				-1 & 1
			\end{bmatrix}
		\right).
	\end{aligned}
	\end{equation}
	We can again use the same strategy to also construct $\mathcal{P}_d$-exact sub-cell SBP operators for other values of $d$ using higher-order Gauss--Radau quadratures, some of which can be found in our reproducibility repository \cite{glaubitz2025towardsRepro}.
\end{example}
\section{Numerical tests}
\label{sec:tests}

For the subsequent numerical tests, we focus on the multi-block setting with Gauss--Lobatto SBP operators, which are commonly used in discontinuous Galerkin spectral element methods (DGSEMs).
Furthermore, we couple these SBP operators as described in \Cref{expl:operator1} to construct the proposed sub-cell SBP operators.
In all tests, we choose $\Omega_u = [-1, 0.1]$ and $\Omega_v = [-0.1, 1]$ together with periodic boundary conditions.
To integrate the semi-discretization in time, we use the explicit fourth-order, nine-stage Runge--Kutta method from \cite{ranocha2022optimized} with absolute and relative tolerance $10^{-8}$ for the error-based adaptive time-stepping.
For the convergence tests, we use the stricter tolerances $10^{-14}$ to reduce the error in time, thus focusing on the spatial error.
The overset grid method has been implemented in the Julia package SimpleDiscontinuousGalerkin.jl\JL{\footnote{\url{https://github.com/JoshuaLampert/SimpleDiscontinuousGalerkin.jl}}}, and the sub-cell operators are integrated into the framework of SummationByPartsOperators.jl \cite{ranocha2021summationbypartsoperators} via the package SummationByPartsOperatorsExtra.jl\JL{\footnote{\url{https://github.com/JoshuaLampert/SummationByPartsOperatorsExtra.jl}}}.
The time integration is performed using OrdinaryDiffEq.jl \cite{rackauckas2017differentialequations}.
Our numerical experiments can be reproduced with the openly available reproducibility repository \cite{glaubitz2025towardsRepro}.

\subsection{Linear advection equation}

We start by considering the linear advection equation \cref{eq:IBVP_scalar} with advection velocity $\alpha = 2$ and the initial data $w_0(x) = \sin(k\pi x)$ for $k \in \mathbb{N}$.
Since the solution propagates solely to the right, it suffices to employ a sub-cell operator only in the element containing $b$, while using standard Gauss--Lobatto operators in all other elements of both meshes.
For $k = 1$, the $L^2$-errors and experimental orders of convergence (EOC) are reported in \Cref{table:eocs_advection} for polynomial degrees $d = 3$ and $d = 4$.
Here, $N$ denotes the number of elements in each of the two domains.

\begin{table}[htb]
	\caption{$L^2$-errors and EOCs for the linear advection equation and polynomial degrees $d=3$ and $d=4$}
	\begin{subtable}{.5\linewidth}
		\subcaption{$d = 3$}
		\centering
		\begin{tabular}{ccc}
			\toprule
			N & error & EOC \\
			\midrule
			10 & 2.05e-05 & --- \\
			20 & 1.28e-06 & 4.00 \\
			40 & 8.01e-08 & 4.00 \\
			80 & 5.01e-09 & 4.00 \\
			\bottomrule
		\end{tabular}
	\end{subtable}%
	\begin{subtable}{.5\linewidth}
		\subcaption{$d = 4$}
		\centering
		\begin{tabular}{ccc}
			\toprule
			N & error & EOC \\
			\midrule
			10 & 3.40e-07 & --- \\
			20 & 1.09e-08 & 4.97 \\
			40 & 3.43e-10 & 4.98 \\
			80 & 1.09e-11 & 4.98 \\
			\bottomrule
		\end{tabular}
	\end{subtable}%
	\label{table:eocs_advection}
\end{table}

Next, we compare the proposed sub-cell semi-discretization with a baseline overset grid method, following the approach in \cite{kopriva2025energy}.
In the baseline scheme, standard Gauss--Lobatto operators are applied in each element, and solution values are interpolated across grids.
For a fair comparison, we match the number of degrees of freedom: the sub-cell method uses $9$ elements in the left mesh and the baseline method $10$, while both employ $10$ elements in the right mesh.
In all cases, the SBP operators are constructed from polynomials of degree $d = 3$ on Gauss--Lobatto points.
To highlight the limitations of the baseline overset approach, we consider a high-frequency wave with $k = 4$ and evolve the solution to a final time of $t = 200$.
The numerical solutions of the sub-cell and baseline overset methods are compared in \Cref{fig:advection_solution}.
The sub-cell discretization is dissipative, but remains stable, whereas the discretization without sub-cell operators is unstable with increasing amplitude.
The results in \Cref{fig:advection_errors} compare the $L^2$- and $L^\infty$-errors of both discretizations over time.
Although both exhibit linear error growth, the proposed sub-cell SBP operators achieve significantly lower errors.

\begin{figure}[tb]
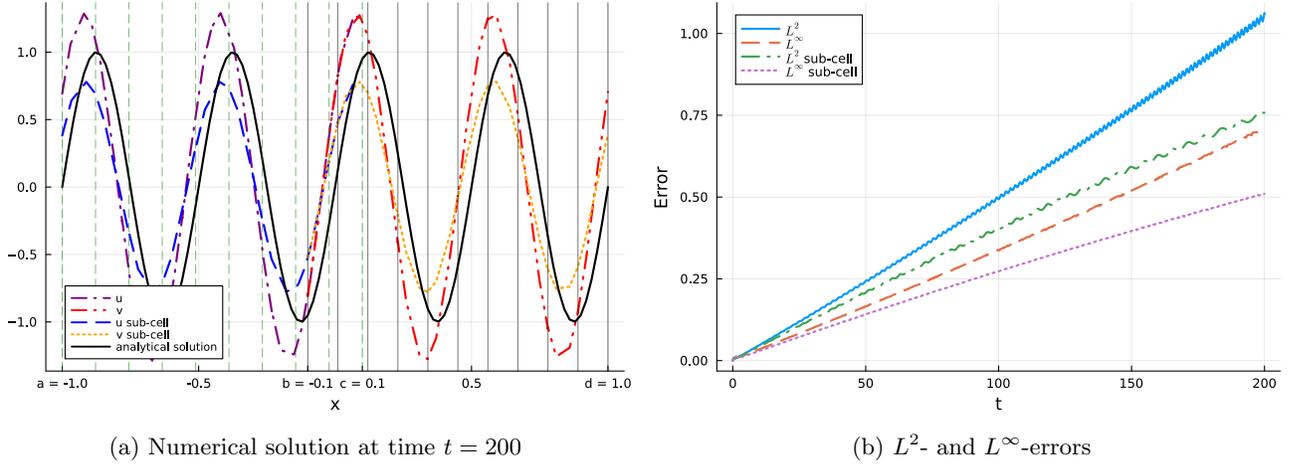

	\centering
	\begin{subfigure}[b]{0.495\textwidth}
		\includegraphics[width=\textwidth]{%
			figures/subcell_advection_overset}
		\caption{
			Numerical solution at time $t = 200$
		}
		\label{fig:advection_solution}
    \end{subfigure}%
	~
	\begin{subfigure}[b]{0.495\textwidth}
		\includegraphics[width=\textwidth]{%
			figures/subcell_advection_overset_errors}
		\caption{
			$L^2$- and $L^\infty$-errors
		}
		\label{fig:advection_errors}
    \end{subfigure}
    \caption{
        Numerical solutions of the linear advection equation and their errors for an overset grid semi-discretization with and without a sub-cell operator.
		Both discretizations use a polynomial degree $d = 3$.
		The discretization without sub-cell operators uses $10$ elements in both the left and right mesh, and the discretization with one sub-cell operator in the element containing $b$ uses $9$ elements in the left mesh and $10$ elements in the right mesh.
		The (green) dashed and (gray) straight vertical lines in \Cref{fig:advection_solution} indicate the element boundaries for the mesh of $\Omega_u$ and $\Omega_v$, respectively.
	}
    \label{fig:signal_UQ}
\end{figure}

To further illustrate the enhanced stability of the sub-cell SBP discretization, we analyze the spectra of the Jacobians of the semi-discrete right-hand side functions for both the sub-cell and baseline methods.
The spectra are shown in \Cref{fig:advection_spectra}.
For the baseline overset grid method, several eigenvalues exhibit small positive real parts, indicating instabilities.
In contrast, the spectrum of the sub-cell method lies entirely in the left half-plane, showing its stability.
The only trade-off is a slightly more restrictive CFL condition due to the smaller sub-cell size.

\begin{figure}[tb]
	\centering
	\includegraphics[width=0.8\textwidth]{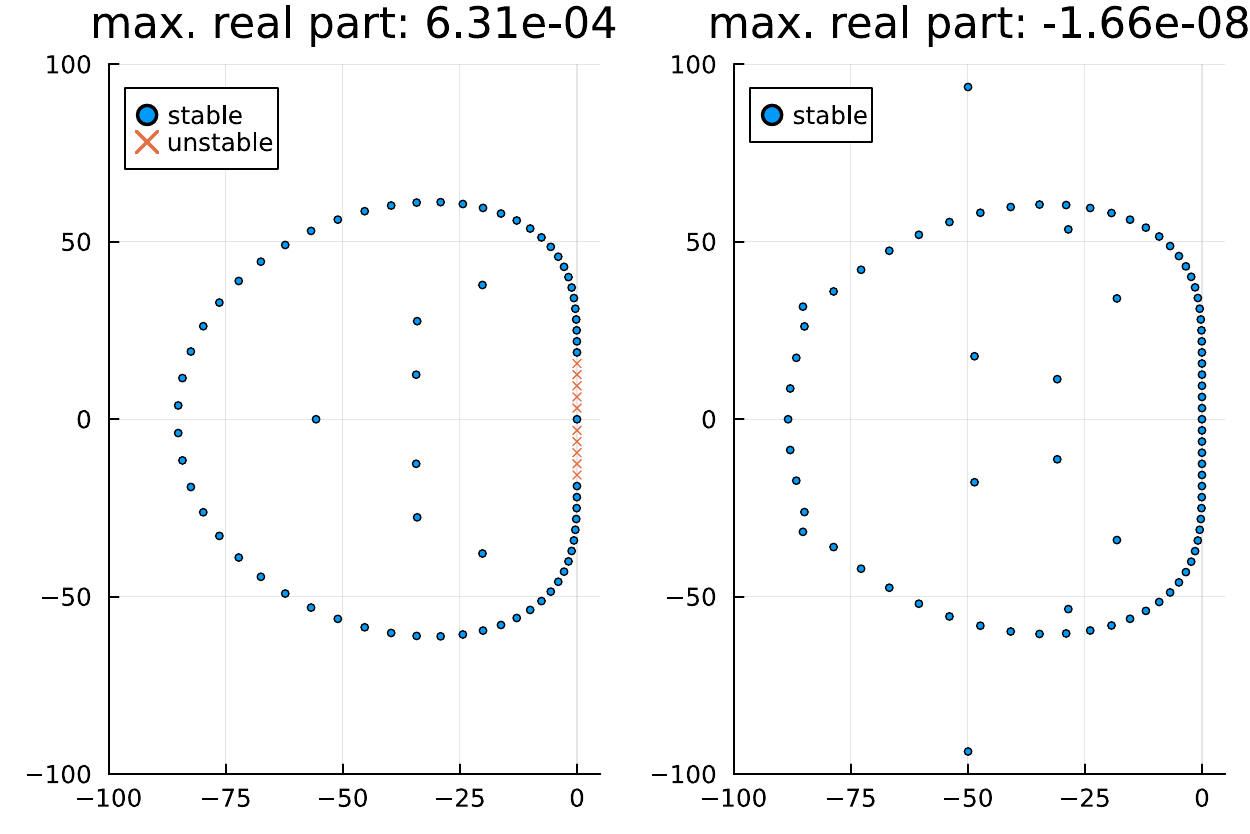}
	\caption{
		Spectra of the Jacobians of the semi-discretization's right-hand sides.
		The discretization in the left subplot uses no sub-cell SBP operators, while the right subplot does use sub-cell SBP operators.
	}
	\label{fig:advection_spectra}
\end{figure}

Finally, we present results demonstrating the conservation and energy stability of the sub-cell discretization for the linear advection equation---as theoretically investigated in \Cref{sub:application_conservation} and \Cref{sub:application_stability}.
The upper subplots of \Cref{fig:conservation} and \Cref{fig:entropy_stability} illustrate conservation and the semi-discrete change in energy.
As expected, conservation is maintained up to machine precision, and the time derivative of the energy remains non-positive.
For the baseline scheme, these quantities are omitted since it is not clear how to evaluate the overset integral \cref{eq:overset_integral} for them.

\begin{figure}[tb]
	\centering
	\includegraphics[width=0.9\textwidth]{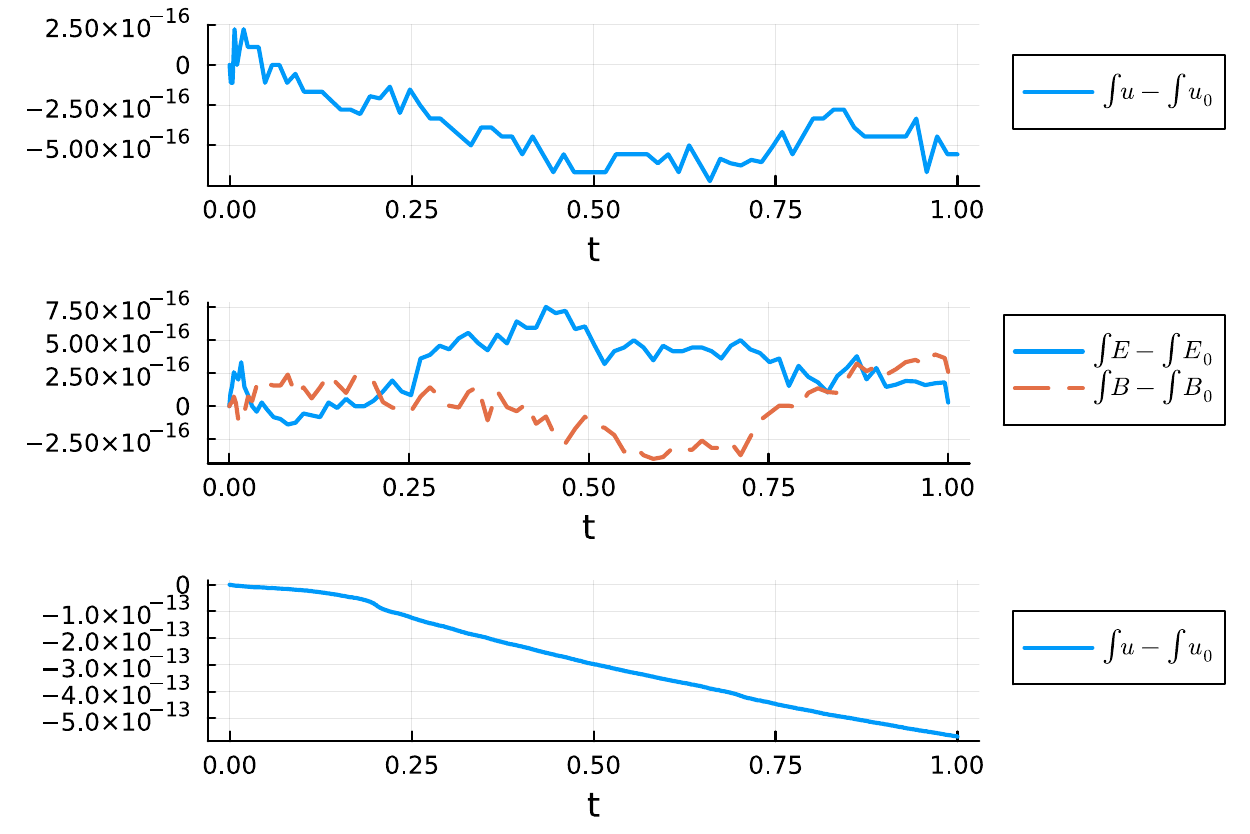}
	\caption{
		Change in the conserved linear quantities for the linear advection equation (top), Maxwell's system (center), and the non-linear Burgers' equation (bottom).
	}
	\label{fig:conservation}
\end{figure}

\begin{figure}[tb]
	\centering
	\includegraphics[width=0.9\textwidth]{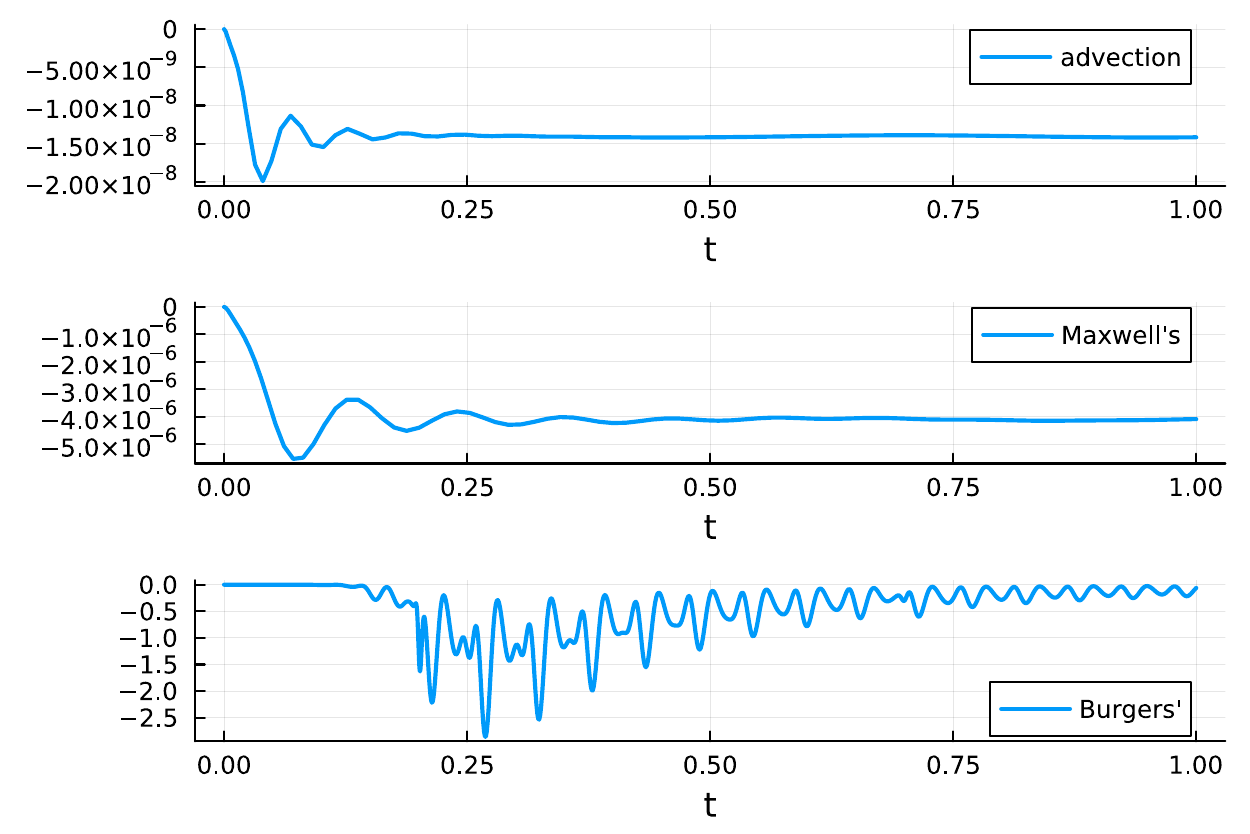}
	\caption{
		Semi-discrete time derivative of the energy for the linear advection equation (top), Maxwell's system (center), and the non-linear Burgers' equation (bottom).
	}
	\label{fig:entropy_stability}
\end{figure}

Notably, \Cref{fig:conservation} and \Cref{fig:entropy_stability} also confirm that the sub-cell overset discretization preserves conservation and entropy stability when applied to both the non-diagonalized linear system of Maxwell's equations and the non-linear inviscid Burgers' equation,
\begin{equation}
	\frac{\partial}{\partial t}
	\begin{pmatrix}
		E \\ B
	\end{pmatrix}
	+
	\frac{\partial}{\partial x}
	\begin{pmatrix}
		c^2 B \\ E
	\end{pmatrix}
	=
	\begin{pmatrix}
		0 \\ 0
	\end{pmatrix},
	\quad
	\frac{\partial}{\partial t} w + \frac{\partial}{\partial x} \left(\frac{w^2}{2}\right) = 0,
\end{equation}
with suitable initial and boundary conditions and $c \in \mathbb{R}^+$.
Details on these problem setups are provided in the reproducibility repository \cite{glaubitz2025towardsRepro}.
The linear Maxwell's equations were discretized using the Rusanov (local Lax--Friedrichs) flux, while the Burgers' equation employed the Godunov (upwind) flux \cite{toro2013riemann}.

\subsection{Compressible Euler equations}\label{sec:compEuler}

To further evaluate the performance of the sub-cell discretization, we next consider the compressible Euler equations in one spatial dimension:
\begin{equation}\label{eq:Euler}
	\frac{\partial}{\partial t}
	\begin{pmatrix}
		\rho \\ \rho v \\ \rho e
	\end{pmatrix}
	+
	\frac{\partial}{\partial x}
	\begin{pmatrix}
		\rho v \\ \rho v^2 + p \\ (\rho e + p) v
	\end{pmatrix}
	=
	\begin{pmatrix}
		s_1 \\ s_2 \\ s_3
	\end{pmatrix},
\end{equation}
where $p = (\gamma - 1) ( \rho e - \rho v^2 / 2 )$ denotes the pressure, $\rho$ the density, $v$ the velocity, $e$ the specific total energy, and $\gamma$ the ratio of specific heats.
The Euler system admits a mathematical entropy function $S = -\rho s / (\gamma - 1)$, with specific entropy $s = \log(p / \rho^\gamma)$.
For the spatial semi-discretization, we employ flux differencing \cite{fisher2013high-order,gassner2016split} using the volume flux from \cite{ranocha2018thesis} together with the Harten--Lax--van Leer
(HLL) surface flux \cite{harten1983upstream,toro2013riemann}.

\Cref{table:eocs_compressible_euler} reports on the convergence of the overset grid method with sub-cell operators, measured in terms of the $L^2$-error, for polynomial degrees $d = 3$ and $d = 4$.
The table also lists the experimental orders of convergence (EOCs) for the density $\rho$; results for the other variables are similar and therefore omitted.
For this test, we add the source terms
\begin{equation}
	s_1(x, t) = 0, \quad
	s_2(x, t) = \omega A\cos(\omega (x - t))(2\rho(x, t) - 1/2)(\gamma - 1), \quad
	s_3(x, t) = s_2(x, t)
\end{equation}
to the right-hand side of the compressible Euler equations \cref{eq:Euler} to get the manufactured solution
\begin{equation}\label{eq:initial_condition_Euler}
\begin{aligned}
	\rho(x, t) = c + A\sin(\omega (x - t)), \quad
	(\rho v)(x, t) = \rho(x, t), \quad
	(\rho e)(x, t) = \rho(x, t)^2.
\end{aligned}
\end{equation}
We use periodic boundary conditions, choose $c = 2$, $A = 0.1$, $\omega = \pi$, and consider the time span from $0$ to $2$.
The results in \Cref{table:eocs_compressible_euler} confirm a convergence rate of approximately $d+1$.

\begin{table}[htb]
	\caption{$L^2$-errors and EOCs for the compressible Euler equations and polynomial degrees $d=3$ and $d=4$}
	\begin{subtable}{.5\linewidth}
		\subcaption{$d = 3$}
		\centering
		\begin{tabular}{ccccc}
			\toprule
			N & $\rho$ & $\rho v$ & $\rho e$ & EOC \\
			\midrule
			10 & 5.43e-06 & 2.16e-06 & 1.28e-05 & --- \\
			20 & 3.93e-07 & 1.31e-07 & 7.86e-07 & 3.79 \\
			40 & 2.13e-08 & 8.04e-09 & 4.33e-08 & 4.21 \\
			80 & 1.23e-09 & 5.02e-10 & 2.55e-09 & 4.12 \\
			\bottomrule
		\end{tabular}
	\end{subtable}%
	\begin{subtable}{.5\linewidth}
		\subcaption{$d = 4$}
		\centering
		\begin{tabular}{ccccc}
			\toprule
			N & $\rho$ & $\rho v$ & $\rho e$ & EOC \\
			\midrule
			10 & 2.09e-07 & 3.96e-08 & 3.84e-07 & --- \\
			20 & 7.42e-09 & 1.16e-09 & 1.25e-08 & 4.82 \\
			40 & 1.52e-10 & 3.43e-11 & 2.62e-10 & 5.61 \\
			80 & 4.72e-12 & 1.85e-12 & 8.26e-12 & 5.01 \\
			\bottomrule
		\end{tabular}
	\end{subtable}%
	\label{table:eocs_compressible_euler}
\end{table}

As discussed in \Cref{cla:fullyUpwind}, a fully upwind numerical surface flux is required to maintain convergence and entropy stability in non-linear problems.
To illustrate this, we compare results obtained with the Rusanov (local Lax--Friedrichs) flux, which is not fully upwind, with those from the HLL flux, which is fully upwind for the considered test case.
For this comparison, we again use \cref{eq:initial_condition_Euler} for $t = 0$ as the initial condition, but without source terms, i.e. $\boldsymbol{s} = \boldsymbol{0}$.
We observe in \Cref{fig:entropy_stability_euler} that the HLL flux conserves linear quantities up to a precision of around $10^{-13}$ (top left plot), while the Rusanov flux only preserves these up to a precision of around $10^{-7}$ (bottom left plot).
We also find that the HLL flux yields a small amount of dissipation (top right plot), indicated by small negative values for the semi-discrete time derivative of the mathematical entropy.
In contrast, the Rusanov flux can result in anti-dissipation (bottom right plot).
The results in \Cref{fig:entropy_stability_euler} therefore confirm the observations in \Cref{cla:fullyUpwind}: the Rusanov flux fails to maintain conservation and entropy stability for the sub-cell overset problem, whereas the HLL flux succeeds.

\begin{figure}[tb]
	\centering
	\includegraphics[width=0.99\textwidth]{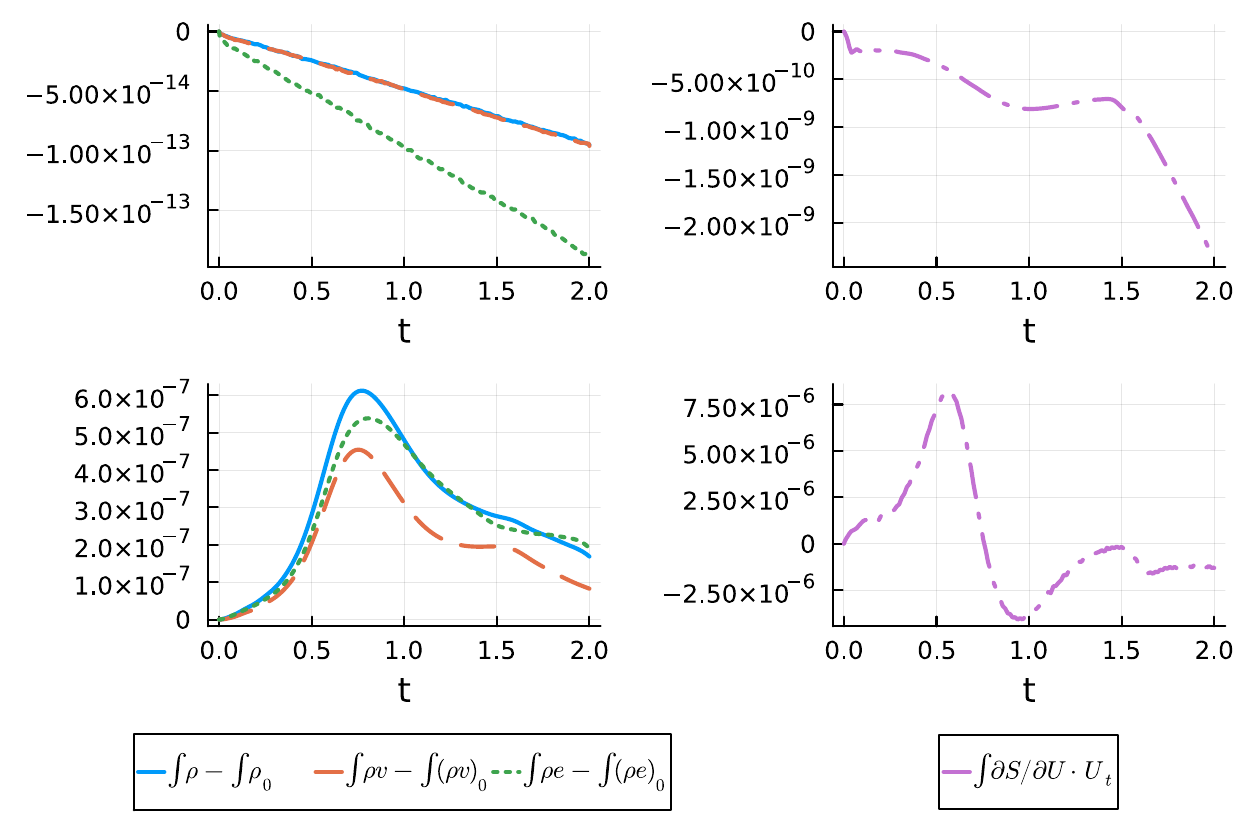}
	\caption{
		Change in conserved linear quantities (left column) and semi-discrete time derivative of the mathematical entropy (right column) for the compressible Euler equations using the HLL flux (top row) and the Rusanov (local Lax--Friedrichs) flux (bottom row).
		We observe that the HLL flux conserves linear quantities up to a precision of around $10^{-13}$ (top left plot), while the Rusanov flux only preserves these up to $10^{-7}$ (bottom left plot).
	Furthermore, the HLL flux yields a small amount of dissipation (top right plot), as indicated by small negative values for the semi-discrete time derivative of the mathematical entropy.
	In contrast, the Rusanov flux can result in anti-dissipation (bottom right plot).
	}
	\label{fig:entropy_stability_euler}
\end{figure}
\section{Summary}
\label{sec:summary}

We introduced the concept of sub-cell SBP operators as a framework for developing provably conservative and energy-stable numerical methods for overset grid discretizations of one-dimensional hyperbolic conservation laws.
The continuous analysis highlighted that IBP must be applied not only on entire cells but also on sub-cells, which the proposed sub-cell SBP operators mimic discretely.
Furthermore, we proved the existence of sub-cell SBP operators and illustrated their construction.
Computational experiments demonstrated the robustness and accuracy of the resulting sub-cell SBP methods across a range of problems, including linear advection, the inviscid Burgers' equation, Maxwell’s equations, and compressible Euler equations, showing clear improvements over baseline overset methods.
Future research will prioritize the construction of sub-cell SBP operators in higher dimensions and investigate efficient algorithms for their practical implementation, potentially building upon existing results for multi-dimensional SBP operators \cite{hicken2016multidimensional,glaubitz2023multi,hicken2024constructing,worku2025tensor}.

\appendix 
\section{Proofs for \Cref{sub:operators_existence}}
\label{app:proofs}

We start by proving \Cref{lem:existence}, which translates the existence of sub-cell SBP operators into the existence of solutions to a pair of non-linear equations. 
Suppose we are given $B_L$ and $B_R$ satisfying (iv) in \Cref{def:subcell_SBP} and let $B = B_L + B_R$.
Note that we can decompose $Q_L$ and $Q_R$ as $Q_{L/R} = S_{L/R} + B_{L/R} / 2$, where $S_{L/R}$ are the skew-symmetric parts of $Q_{L/R}$ and $B_{L/R}/2$ are the symmetric parts of $Q_{L/R}$.
Henceforth, we suppose that (i) $Q_{L/R} = S_{L/R} + B_{L/R} / 2$, (ii) $S_{L/R}$ is skew-symmetric, and (iii) $P_{L/R} = \diag( \mathbf{p}_{L/R} )$ with $(p_{L/R})_n > 0$ if $x_n \in \Omega_{L/R}$ and $(p_{L/R})_n = 0$ otherwise.

\begin{lemma}\label{lem:existence}
	Let $B_L$ and $B_R$ satisfy (iv) in \Cref{def:subcell_SBP} and let $B = B_L + B_R$, $P = P_L + P_R$, and $Q = Q_L + Q_R$.
	Furthermore, let $\{f_k\}_{k=1}^K$ be a basis of $\mathcal{F}$ and $V = [ \mathbf{f}_1, \dots, \mathbf{f}_K ] \in \R^{N \times K}$ and $V' = [ \mathbf{f}'_1, \dots, \mathbf{f}'_K ] \in \R^{N \times K}$.
	Then, $D = P^{-1} Q$ is an $\mathcal{F}$-exact sub-cell SBP operator if and only if
	\begin{subequations}\label{eq:existence}
	\begin{align}
		S_L V + S_R V - P_L V' - P_R V' + B V / 2 & = 0, \label{eq:existence1} \\
		P_L S_R - P_R S_L + P_L B_R / 2 - P_R B_L / 2 & = 0 \label{eq:existence2}
	\end{align}
	\end{subequations}
	are satisfied.
\end{lemma}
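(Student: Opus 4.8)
The plan is to go through the five defining conditions (i)--(v) of \Cref{def:subcell_SBP} and separate those that are automatically satisfied under the standing assumptions from those that impose genuine constraints; the two genuine constraints will turn out to be exactly \cref{eq:existence1,eq:existence2}. First I would record the free gifts: (ii) holds by the assumed diagonal form and support of $P_{L/R}$; (iv) holds by hypothesis on $B_L$ and $B_R$; (v) holds by the very definitions $P = P_L + P_R$, $B = B_L + B_R$, $Q = Q_L + Q_R$; and the symmetry part of (iii), namely $Q_{L/R} + Q_{L/R}^T = B_{L/R}$, follows at once from the standing decomposition $Q_{L/R} = S_{L/R} + B_{L/R}/2$ with $S_{L/R}$ skew-symmetric. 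Note also that $P = P_L + P_R$ is invertible because every node lies in $\Omega_L$ or $\Omega_R$, so $PD = Q$ holds automatically from $D = P^{-1}Q$. Thus only two substantive requirements remain: the exactness condition (i), $D\mathbf{f} = \mathbf{f}'$, and the sub-cell decomposition $P_{L/R} D = Q_{L/R}$ from (iii).

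Next I would dispatch the exactness condition. Since $\{f_k\}_{k=1}^K$ is a basis of $\mathcal{F}$, condition (i) is equivalent to the matrix identity $D V = V'$, and multiplying by $P$ turns it into $Q V = P V'$. Substituting $Q = Q_L + Q_R = S_L + S_R + B/2$ and $P = P_L + P_R$, then moving everything to one side, reproduces \cref{eq:existence1} verbatim. This step is a routine rearrangement.

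The crux of the argument is the decomposition condition $P_L D = Q_L$, which together with the automatic identity $PD = Q$ forces $P_R D = Q_R$ as well, so the two halves are equivalent and it suffices to treat one. Here I would exploit that $P_L$, $P_R$, and $P$ are all diagonal and hence commute. Writing $P_L D = Q_L$ as $P_L P^{-1} Q = Q_L$, left-multiplying by the invertible matrix $P$, and using $P P_L P^{-1} = P_L$ yields the equivalent relation $P_L Q = P Q_L$. Expanding $Q = Q_L + Q_R$ and $P = P_L + P_R$ and cancelling the common term $P_L Q_L$ collapses this to $P_L Q_R = P_R Q_L$. Inserting $Q_{L/R} = S_{L/R} + B_{L/R}/2$ then gives precisely \cref{eq:existence2}. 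Every manipulation is reversible, so this yields the equivalence in both directions.

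I expect the main obstacle to be the bookkeeping in this last step: one must recognize that $P_L D = Q_L$ and $P_R D = Q_R$ are not independent, and that the awkward factor $P^{-1}$ can be cleared only because the norm matrices are diagonal and therefore commute with $P$. Once commutativity is used to pass from $P_L P^{-1} Q = Q_L$ to the symmetric identity $P_L Q_R = P_R Q_L$, the remaining substitution is purely mechanical. For the converse (\textquotedblleft if\textquotedblright) direction I would simply run the same chain of equivalences backwards: \cref{eq:existence1} restores exactness (i), \cref{eq:existence2} restores the sub-cell decomposition in (iii), and the conditions already shown to be automatic supply the rest of \Cref{def:subcell_SBP}, so $D = P^{-1}Q$ is an $\mathcal{F}$-exact sub-cell SBP operator.
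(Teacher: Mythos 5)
Your proof is correct and follows essentially the same route as the paper's: reduce to condition (i) and the decomposition $P_{L/R}D = Q_{L/R}$ in (iii), obtain \cref{eq:existence1} by multiplying $DV = V'$ through by $P$, and obtain \cref{eq:existence2} by clearing $P^{-1}$ via the commutativity of the diagonal norm matrices. Your observation that $P_L D = Q_L$ and $P_R D = Q_R$ are equivalent given the automatic identity $PD = Q$ (so only one half need be treated), and your intermediate form $P_L Q_R = P_R Q_L$, are minor streamlinings of the paper's argument, which simply repeats the computation for both halves.
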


\begin{proof}
	We have to show that conditions (i) and $P_{L/R} D = Q_{L/R}$ in (iii) of \Cref{def:subcell_SBP} are equivalent to \cref{eq:existence1,eq:existence2}.
	To this end, note that (i) is equivalent to $D V = V'$.
	Multiplying both sides by $P$ shows that (i) is equivalent to $Q V = P V'$.
	Substituting $Q = Q_L + Q_R$, $Q_{L/R} = S_{L/R} + B_{L/R} / 2$, and $P = P_L + P_R$, we can rewrite $Q V = P V'$ as \cref{eq:existence1}.

	Next, we rewrite $P_{L} D = Q_{L}$ in (iii) of \Cref{def:subcell_SBP} as $P_L P^{-1} (S + B/2) = S_L + B_L/2$.
	Multiplying both sides by $P$ and noting that $P P_L = P_L P$---since diagonal matrices commute with each other, we get $P_L(S + B/2) = P(S_L + B_L/2)$.
	Substituting $S = S_L + S_R$ and $B = B_L + B_R$, we rewrite $P_{L} D = Q_{L}$ as \cref{eq:existence2}.
	Using the same arguments as above, we also rewrite $P_{R} D = Q_{R}$ as \cref{eq:existence2}, which shows that $P_{L/R} D = Q_{L/R}$ in (ii) is equivalent to \cref{eq:existence2}.
\end{proof}

\Cref{lem:existence} implies several necessary conditions on the structure of sub-cell SBP operators and their existence.
Specifically, it reveals fundamental restrictions on the structure of sub-cell SBP operators.
In particular, the existence of such operators necessitates a specific block-diagonal structure.
We formalize this insight in \Cref{cor:structure_S}, which ultimately leads to the proof of our main result, \Cref{thm:main}.

\begin{lemma}\label{cor:structure_S}
	Let $D = P^{-1} Q$ be a sub-cell SBP operator with $Q = S + B/2$, $B = B_L + B_R$, and $B_L$, $B_R$ as in \cref{eq:single_construction_B_form}.
	\begin{enumerate}
		\item
		The discrete projection operators $\mathbf{e}_L, \mathbf{e}_{M_L}$ and $\mathbf{e}_{M_R}, \mathbf{e}_{R}$ that define $B_L$ and $B_R$ via \cref{eq:single_construction_B_form} are supported on $\Omega_L$ and $\Omega_R$, respectively.

		\item
		The skew-symmetric matrix $S$ is $S = S_L + S_R$ with
		\begin{equation}
			S_L = \begin{bmatrix}
				S_L^{1,1} & 0 \\
				0 & 0
			\end{bmatrix},
			\quad
			S_R = \begin{bmatrix}
				0 & 0 \\
				0 & S_R^{2,2}
			\end{bmatrix},
		\end{equation}
		where $S_L^{1,1} \in \R^{N_L \times N_L}$ and $S_R^{2,2} \in \R^{N_R \times N_R}$.
	\end{enumerate}
\end{lemma}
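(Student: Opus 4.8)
The plan is to exploit the support constraint on the diagonal norm matrices imposed by condition (ii) in \Cref{def:subcell_SBP} and propagate it through the symmetry relation $B_{L/R} = Q_{L/R} + Q_{L/R}^T$ from condition (iii). Throughout I would order the nodes so that the $N_L$ nodes of $\Omega_L$ come first and the $N_R$ nodes of $\Omega_R$ last, and write every matrix in the corresponding $2\times 2$ block form. The key initial observation is that, since $P_L = \diag(\mathbf{p}_L)$ vanishes on the $\Omega_R$-indices, the product $Q_L = P_L D$ has identically zero rows there; hence $Q_L$ has a vanishing lower block row. Consequently $B_L = Q_L + Q_L^T$ has a vanishing bottom-right $N_R \times N_R$ block, and by the mirror argument $B_R$ has a vanishing top-left $N_L \times N_L$ block.

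To establish part 1, I would combine this with the rank-one structure \cref{eq:single_construction_B_form}. Writing $\mathbf{e}_{M_L} = [\mathbf{a}; \mathbf{b}]$ with $\mathbf{b} \in \R^{N_R}$, and recalling from \Cref{sub:boundaryOp} that $\mathbf{e}_L = [1,0,\dots,0]^T$ is supported on $\Omega_L$, the bottom-right block of $B_L = \mathbf{e}_{M_L}\mathbf{e}_{M_L}^T - \mathbf{e}_L \mathbf{e}_L^T$ equals $\mathbf{b}\mathbf{b}^T$. Since this block must vanish, its diagonal entries $b_i^2$ are zero, forcing $\mathbf{b} = \mathbf{0}$; thus $\mathbf{e}_{M_L}$ is supported on $\Omega_L$. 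The identical argument applied to $B_R$, using that $\mathbf{e}_R$ is supported on $\Omega_R$, shows $\mathbf{e}_{M_R}$ is supported on $\Omega_R$.

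For part 2, I would compare the two expressions for $B_L$ block by block. Having shown $\mathbf{b} = \mathbf{0}$ and using that $\mathbf{e}_L$ is supported on $\Omega_L$, the off-diagonal blocks of $\mathbf{e}_{M_L}\mathbf{e}_{M_L}^T - \mathbf{e}_L\mathbf{e}_L^T$ vanish, so matching with $B_L = Q_L + Q_L^T$ forces the upper-right block $Q_L^{1,2}$ to be zero. Together with the vanishing lower block row, this makes $Q_L$ block-diagonal with only a top-left $N_L \times N_L$ block nonzero. Its skew-symmetric part $S_L = \tfrac{1}{2}(Q_L - Q_L^T)$ then inherits this structure, giving $S_L = \diag(S_L^{1,1}, 0)$ with $S_L^{1,1} = \tfrac{1}{2}(Q_L^{1,1} - (Q_L^{1,1})^T)$; the mirror-image argument yields $S_R = \diag(0, S_R^{2,2})$. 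Finally $S = S_L + S_R$ follows from condition (v), since $Q = Q_L + Q_R$ and $B = B_L + B_R$ give $S = Q - B/2 = (Q_L - B_L/2) + (Q_R - B_R/2)$.

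The main obstacle is conceptual rather than computational: recognizing that the seemingly mild support requirement on the diagonal norm matrix $P_{L/R}$ forces entire block rows of $Q_{L/R}$ to vanish, which is exactly what decouples the operator into its two sub-cell pieces. The one subtlety I would check carefully is that the argument genuinely needs $\mathbf{e}_L$ and $\mathbf{e}_R$ to be supported on their own sub-cells, which is precisely the assumption made in \Cref{sub:boundaryOp}; otherwise the rank-one corrections could repopulate the off-diagonal blocks and the decoupling would fail. Everything else is routine block-matrix bookkeeping.
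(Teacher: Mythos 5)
Your proposal is correct and follows essentially the same route as the paper: block-partition the operators according to the sub-cell nodes, observe that the vanishing diagonal entries of $P_L$ kill the bottom block-row of $Q_L = P_L D$, and then use the symmetry of $B_L = Q_L + Q_L^T$ against the rank-one form \cref{eq:single_construction_B_form} to force first the support of $\mathbf{e}_{M_L}$ and then the block-diagonal structure of $S_L$ (the paper phrases the same computation via $S_L = P_L D - B_L/2$ and skew-symmetry, which is equivalent). If anything, you are slightly more careful than the paper at the one delicate point—explicitly invoking that $\mathbf{e}_L$ is supported on $\Omega_L$ (from \Cref{sub:boundaryOp}) so that the vanishing bottom-right block $\mathbf{b}\mathbf{b}^T = 0$ genuinely forces $\mathbf{b} = \mathbf{0}$, a step the paper states without comment.
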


\begin{proof}
	We demonstrate the assertions of \Cref{cor:structure_S} for $B_L$ and $S_L$.
	The proof for $B_R$ and $S_R$ is analogous.
	We start by writing $S_L, P_L, D, B_L \in \R^{N \times N}$ as
	\begin{equation}\label{eq:structure_S_proof1}
		S_L = \begin{bmatrix}
			S_L^{1,1} & S_L^{1,2} \\
			S_L^{2,1} & S_L^{2,2}
		\end{bmatrix}, \quad
		P_L = \begin{bmatrix}
			P_L^{1,1} & 0 \\
			0 & 0
		\end{bmatrix}, \quad
		D = \begin{bmatrix}
			D^{1,1} & D^{1,2} \\
			D^{2,1} & D^{2,2}
		\end{bmatrix}, \quad
		B_L = \begin{bmatrix}
			B_L^{1,1} & B_L^{1,2} \\
			B_L^{2,1} & B_L^{2,2}
		\end{bmatrix}
	\end{equation}
	with $S_L^{1,1}, D^{1,1}, B_L^{1,1} \in \R^{N_L \times N_L}$,
	$S_L^{1,2}, D^{1,2}, B_L^{1,2} \in \R^{N_L \times N_R}$,
	$S_L^{2,1}, D^{2,1}, B_L^{2,1} \in \R^{N_R \times N_L}$, and
	$S_L^{2,2}, D^{2,2}, B_L^{2,2} \in \R^{N_R \times N_R}$.
	Furthermore, recall that $S_L = P_L D - B_L / 2$, which yields
	\begin{equation}\label{eq:structure_S_proof2}
		\begin{bmatrix}
			S_L^{1,1} & S_L^{1,2} \\
			S_L^{2,1} & S_L^{2,2}
		\end{bmatrix}
		= \begin{bmatrix}
			P_L^{1,1} D^{1,1} & P_L^{1,1} D^{1,2} \\
			0 & 0
		\end{bmatrix}
		- \frac{1}{2} \begin{bmatrix}
			B_L^{1,1} & B_L^{1,2} \\
			B_L^{2,1} & B_L^{2,2}
		\end{bmatrix}.
	\end{equation}
	Specifically, \cref{eq:structure_S_proof2} implies $S_L^{2,1} = -B_L^{2,1}/2$ and $S_L^{2,2} = -B_L^{2,2}/2$.
	Now recall that $S_L$ is skew-symmetric, that is, $[S_L^{1,1}]^T = - S_L^{1,1}$, $[S_L^{2,2}]^T = -S_L^{2,2}$, and $[S_L^{2,1}]^T = -S_L^{1,2}$.
	At the same time, $B_L$ is symmetric, that is, $[B_L^{1,1}]^T = B_L^{1,1}$, $[B_L^{2,2}]^T = B_L^{2,2}$, and $[B_L^{2,1}]^T = B_L^{1,2}$.
	Hence, $[B_L^{2,2}]^T = -2[S_L^{2,2}]^T = 2S_L^{2,2} = -B_L^{2,2}$ implies $B_L^{2,2} = 0$ and therefore $S_L^{2,2} = 0$.
	Specifically, $B_L^{2,2} = 0$ means that $\mathbf{e}_L$ and $\mathbf{e}_{M_L}$ are supported on $\Omega_L$, which shows the first assertion of \Cref{cor:structure_S}.
	Moreover, in this case, $B_L^{2,1} = 0$ and $B_L^{1,2} = 0$ due to $B_L = \mathbf{e}_{M_L}\mathbf{e}_{M_L}^T - \mathbf{e}_L\mathbf{e}_L^T$.
	Finally, this yields, $S_L^{2,1} = 0$ and $S_L^{1,2} = 0$, implying the second assertion of \Cref{cor:structure_S}.
\end{proof}

\Cref{cor:structure_S} imposes structural constraints on sub-cell SBP operators.
In particular, the projection vectors $\mathbf{e}_{M_L}$ and $\mathbf{e}_{M_R}$ must be one-sided, relying solely on values to the left and right of $x_M$, respectively. 
For additional details on the projection and boundary operators, see \Cref{sub:boundaryOp} and \Cref{sub:operators_examples}.
Furthermore, \Cref{cor:structure_S} implies that three of the four blocks in $S_L$ and $S_R$ must be zero, thereby reducing the degrees of freedom in designing these operators.
With these structural insights in place, we are now ready to prove our main result, \Cref{thm:main}.

\begin{proof}[Proof of \Cref{thm:main}]
	We first show that $D = P^{-1}( S + B/2 )$ being an $\mathcal{F}$-exact sub-cell SBP operator implies (a) and (b).
	To this end, note that (a) follows from \Cref{cor:structure_S} and its proof.
	To demonstrate (b) for $D_L^{1,1}$, we have to validate
	\begin{enumerate}
		\item[(b.i)]
		$D_L^{1,1,} f(\mathbf{x}_L) = f'(\mathbf{x}_L)$ for all $f \in \mathcal{F}$;

		\item[(b.ii)]
		$P_L^{1,1} = \diag( \mathbf{p}_L^{1,1} )$ with $(\mathbf{p}_L^{1,1})_n > 0$ for all $n=1,\dots,N_L$;

		\item[(b.iii)]
		$S_L^{1,1}$ is skew-symmetric;

		\item[(b.iv)]
		$f(\mathbf{x}_L)^T B_L g(\mathbf{x}_L) = fg|_{\partial \Omega_L}$.

	\end{enumerate}
	Note that (b.i) follows from (i) in \Cref{def:subcell_SBP}; (b.ii) from (ii); (b.iii) from $S$ being skew symmetric; and (b.iv) from (iv), where we have used the decompositions in (a).
	We can analogously show that (b) holds for $D_R^{2,2}$. 
	Second, we show that (a) and (b) imply that $D = P^{-1}( S + B/2 )$ is a sub-cell SBP operator.
	To this end, we lift-up the components of the SBP operators on $\mathbf{x}_L \in \R^{N_L}$ and $\mathbf{x}_R \in \R^{N_R}$ to the larger grid $\mathbf{x} \in \R^{N_L + N_R}$ and choose $P_L = \diag( P_L^{1,1}, 0 )$, $P_R = \diag( 0, P_R^{2,2} )$, $S_L = \diag( S_L^{1,1}, 0 )$, $S_R = \diag( 0, S_R^{2,2} )$, and $B_L = \diag( B_L^{1,1}, 0 )$, $B_R = \diag( 0, B_R^{2,2} )$.
	Then, (i) in \Cref{def:subcell_SBP} follows from $D \mathbf{f} = [ D_L^{1,1} f(\mathbf{x}_L); D_R^{2,2} f(\mathbf{x}_R) ] = [ f'(\mathbf{x}_L); f'(\mathbf{x}_R)] = \mathbf{f}'$ for all $\mathcal{F}$;
	(ii) from $P_L^{1,1}$ and $P_R^{2,2}$ respectively being diagonal positive definite norm operators on $\mathbf{x}_L$ and $\mathbf{x}_R$;
	(iii) from (b), specifically, $S_L^{1,1}$ and $S_R^{2,2}$ being skew-symmetric;
	(iv) from $\mathbf{f}^T B_L \mathbf{g} = f(\mathbf{x}_L)^T B_L^{1,1} g(\mathbf{x}_L) = fg|_{\partial \Omega_L}$ and $\mathbf{f}^T B_R \mathbf{g} = f(\mathbf{x}_R)^T B_R^{2,2} g(\mathbf{x}_R) = fg|_{\partial \Omega_R}$; and (v) from the decompositions in (a).
\end{proof}

\section*{Acknowledgments}
JG was supported by the National Academic Infrastructure for Supercomputing in Sweden (NAISS) grant \#2024/22-1207.
JL acknowledges the support by the Deutsche Forschungsgemeinschaft (DFG) within the Research Training Group GRK 2583 ``Modeling, Simulation and
Optimization of Fluid Dynamic Applications".
JN was supported by Vetenskapsr\aa det (VR), grant \#2021-05484 and the University of Johannesburg's Global Excellence and
Stature Initiative Funding.

\bibliographystyle{siamplain}
\bibliography{references}

\begin{thebibliography}{10}

\bibitem{abramowitz1972handbook}
{\sc M.~Abramowitz and I.~A. Stegun}, {\em Handbook of Mathematical Functions
  with Formulas, Graphs, and Mathematical Tables.}, ERIC, 1972.

\bibitem{appelo2012numerical}
{\sc D.~Appel{\"o}, J.~W. Banks, W.~D. Henshaw, and D.~W. Schwendeman}, {\em
  Numerical methods for solid mechanics on overlapping grids: Linear
  elasticity}, Journal of Computational Physics, 231 (2012), pp.~6012--6050.

\bibitem{bodony2011provably}
{\sc D.~J. Bodony, G.~Zagaris, A.~Reichert, and Q.~Zhang}, {\em Provably stable
  overset grid methods for computational aeroacoustics}, Journal of Sound and
  Vibration, 330 (2011), pp.~4161--4179.

\bibitem{brazell2016overset}
{\sc M.~J. Brazell, J.~Sitaraman, and D.~J. Mavriplis}, {\em An overset mesh
  approach for {3D} mixed element high-order discretizations}, Journal of
  Computational Physics, 322 (2016), pp.~33--51.

\bibitem{brown1997overture}
{\sc D.~L. Brown, G.~S. Chesshire, W.~D. Henshaw, and D.~J. Quinlan}, {\em
  Overture: An object-oriented software system for solving partial differential
  equations in serial and parallel environments}, tech. report, Los Alamos
  National Lab, Los Alamos, NM (United States), 1997.

\bibitem{buning2004cfd}
{\sc P.~Buning, R.~Gomez, and W.~Scallion}, {\em {CFD} approaches for
  simulation of wing-body stage separation}, in 22nd Applied Aerodynamics
  Conference and Exhibit, 2004, p.~4838.

\bibitem{burman2015cutfem}
{\sc E.~Burman, S.~Claus, P.~Hansbo, M.~G. Larson, and A.~Massing}, {\em
  {CutFEM}: Discretizing geometry and partial differential equations},
  International Journal for Numerical Methods in Engineering, 104 (2015),
  pp.~472--501.

\bibitem{chan2018discretely}
{\sc J.~Chan}, {\em On discretely entropy conservative and entropy stable
  discontinuous {G}alerkin methods}, Journal of Computational Physics, 362
  (2018), pp.~346--374.

\bibitem{chan2019efficient}
{\sc J.~Chan, D.~C. Del Rey~Fern{\'a}ndez, and M.~H. Carpenter}, {\em Efficient
  entropy stable {G}auss collocation methods}, SIAM Journal on Scientific
  Computing, 41 (2019), pp.~A2938--A2966.

\bibitem{chan2018chimera}
{\sc W.~M. Chan, S.~A. Pandya, S.~E. Rogers, J.~C. Jensen, H.~C. Lee, D.~L.
  Kao, P.~G. Buning, R.~L. Meakin, D.~A. Boger, and S.~M. Nash}, {\em Chimera
  grid tools}, June 2018,
  \url{https://www.nas.nasa.gov/publications/software/docs/chimera/index.html}.

\bibitem{chen2020review}
{\sc T.~Chen and C.-W. Shu}, {\em Review of entropy stable discontinuous
  {G}alerkin methods for systems of conservation laws on unstructured simplex
  meshes}, CSIAM Transactions on Applied Mathematics, 1 (2020), pp.~1--52.

\bibitem{del2019extension}
{\sc D.~C. Del Rey~Fern{\'a}ndez, P.~D. Boom, M.~H. Carpenter, and D.~W.
  Zingg}, {\em Extension of tensor-product generalized and dense-norm
  summation-by-parts operators to curvilinear coordinates}, Journal of
  Scientific Computing, 80 (2019), pp.~1957--1996.

\bibitem{fernandez2014generalized}
{\sc D.~C. Del Rey~Fern\'andez, P.~D. Boom, and D.~W. Zingg}, {\em A
  generalized framework for nodal first derivative summation-by-parts
  operators}, Journal of Computational Physics, 266 (2014), pp.~214--239.

\bibitem{fernandez2014review}
{\sc D.~C. Del Rey~Fern\'andez, J.~E. Hicken, and D.~W. Zingg}, {\em Review of
  summation-by-parts operators with simultaneous approximation terms for the
  numerical solution of partial differential equations}, Computers \& Fluids,
  95 (2014), pp.~171--196.

\bibitem{fisher2013high-order}
{\sc T.~C. Fisher and M.~H. Carpenter}, {\em High-order entropy stable finite
  difference schemes for nonlinear conservation laws: {F}inite domains},
  Journal of Computational Physics, 252 (2013), pp.~518--557.

\bibitem{fornberg2015primer}
{\sc B.~Fornberg and N.~Flyer}, {\em A Primer on Radial Basis Functions with
  Applications to the Geosciences}, SIAM, 2015.

\bibitem{frenander2016stable}
{\sc H.~Frenander and J.~Nordstr{\"o}m}, {\em A stable and accurate
  {D}avies-like relaxation procedure using multiple penalty terms for lateral
  boundary conditions}, Dynamics of Atmospheres and Oceans, 73 (2016),
  pp.~34--46.

\bibitem{frenander2017stable}
{\sc H.~Frenander and J.~Nordstr{\"o}m}, {\em A stable and accurate relaxation
  technique using multiple penalty terms in space and time}, Dynamics of
  Atmospheres and Oceans, 79 (2017), pp.~56--65.

\bibitem{galbraith2014discontinuous}
{\sc M.~C. Galbraith, J.~A. Benek, P.~D. Orkwis, and M.~G. Turner}, {\em A
  discontinuous {G}alerkin chimera scheme}, Computers \& Fluids, 98 (2014),
  pp.~27--53.

\bibitem{gassner2016split}
{\sc G.~J. Gassner, A.~R. Winters, and D.~A. Kopriva}, {\em Split form nodal
  discontinuous {G}alerkin schemes with summation-by-parts property for the
  compressible {E}uler equations}, Journal of Computational Physics, 327
  (2016), pp.~39--66.

\bibitem{glaubitz2023multi}
{\sc J.~Glaubitz, S.-C. Klein, J.~Nordstr{\"o}m, and P.~{\"O}ffner}, {\em
  Multi-dimensional summation-by-parts operators for general function spaces:
  Theory and construction}, Journal of Computational Physics, 491 (2023),
  p.~112370.

\bibitem{glaubitz2024summation}
{\sc J.~Glaubitz, S.-C. Klein, J.~Nordstr{\"o}m, and P.~{\"O}ffner}, {\em
  Summation-by-parts operators for general function spaces: The second
  derivative}, Journal of Computational Physics, 504 (2024), p.~112889.

\bibitem{glaubitz2025towardsRepro}
{\sc J.~Glaubitz, J.~Lampert, A.~R. Winters, and J.~Nordström}, {\em
  Reproducibility repository for ``{T}owards provable energy-stable overset
  grid methods using sub-cell summation-by-parts operators"}.
\newblock \url{https://github.com/JoshuaLampert/2025\_overset\_grid\_sub-cell},
  2025.

\bibitem{glaubitz2022summation}
{\sc J.~Glaubitz, J.~Nordstr{\"o}m, and P.~{\"O}ffner}, {\em Summation-by-parts
  operators for general function spaces}, SIAM Journal on Numerical Analysis,
  61 (2023), pp.~733--754.

\bibitem{glaubitz2022energy}
{\sc J.~Glaubitz, J.~Nordstr{\"o}m, and P.~{\"O}ffner}, {\em Energy-stable
  global radial basis function methods on summation-by-parts form}, Journal of
  Scientific Computing, 98 (2024), p.~30.

\bibitem{glaubitz2024optimization}
{\sc J.~Glaubitz, J.~Nordstr{\"o}m, and P.~{\"O}ffner}, {\em An
  optimization-based construction procedure for function space based
  summation-by-parts operators on arbitrary grids}, arXiv preprint
  arXiv:2405.08770 (Accepted for publication in Journal of Scientific
  Computing),  (2024).

\bibitem{glaubitz2020stable}
{\sc J.~Glaubitz and P.~{\"O}ffner}, {\em Stable discretisations of high-order
  discontinuous {G}alerkin methods on equidistant and scattered points},
  Applied Numerical Mathematics, 151 (2020), pp.~98--118.

\bibitem{glaubitz2025generalized}
{\sc J.~Glaubitz, H.~Ranocha, A.~R. Winters, M.~Schlottke-Lakemper,
  P.~{\"O}ffner, and G.~Gassner}, {\em Generalized upwind summation-by-parts
  operators and their application to nodal discontinuous {G}alerkin methods},
  Journal of Computational Physics,  (2025), p.~113841.

\bibitem{hansbo2003finite}
{\sc A.~Hansbo, P.~Hansbo, and M.~G. Larson}, {\em A finite element method on
  composite grids based on {N}itsche's method}, ESAIM: Mathematical Modelling
  and Numerical Analysis, 37 (2003), pp.~495--514.

\bibitem{harten1983upstream}
{\sc A.~Harten, P.~D. Lax, and B.~v. Leer}, {\em On upstream differencing and
  {G}odunov-type schemes for hyperbolic conservation laws}, SIAM Review, 25
  (1983), pp.~35--61.

\bibitem{henshaw2002overture}
{\sc W.~Henshaw}, {\em Overture: An object-oriented framework for overlapping
  grid applications}, in 32nd AIAA Fluid Dynamics Conference and Exhibit, 2002,
  p.~3189.

\bibitem{henshaw2006high}
{\sc W.~D. Henshaw}, {\em A high-order accurate parallel solver for
  {M}axwell’s equations on overlapping grids}, SIAM Journal on Scientific
  Computing, 28 (2006), pp.~1730--1765.

\bibitem{hicken2024constructing}
{\sc J.~Hicken, G.~Yan, and S.~Kaur}, {\em Constructing stable, high-order
  finite-difference operators on point clouds over complex geometries}, Journal
  of Computational Physics, 532 (2025), p.~113940.

\bibitem{hicken2016multidimensional}
{\sc J.~E. Hicken, D.~C. Del Rey~Fern{\'a}ndez, and D.~W. Zingg}, {\em
  Multidimensional summation-by-parts operators: general theory and application
  to simplex elements}, SIAM Journal on Scientific Computing, 38 (2016),
  pp.~A1935--A1958.

\bibitem{kageyama2004yin}
{\sc A.~Kageyama and T.~Sato}, {\em “{Y}in-{Y}ang grid”: An overset grid in
  spherical geometry}, Geochemistry, Geophysics, Geosystems, 5 (2004).

\bibitem{kauffman2017overset}
{\sc J.~A. Kauffman, J.~P. Sheldon, and S.~T. Miller}, {\em Overset meshing
  coupled with hybridizable discontinuous {G}alerkin finite elements},
  International Journal for Numerical Methods in Engineering, 112 (2017),
  pp.~403--433.

\bibitem{kopriva1989computation}
{\sc D.~A. Kopriva}, {\em Computation of hyperbolic equations on complicated
  domains with patched and overset {C}hebyshev grids}, SIAM Journal on
  Scientific and Statistical Computing, 10 (1989), pp.~120--132.

\bibitem{kopriva2022theoretical2}
{\sc D.~A. Kopriva, G.~J. Gassner, and J.~Nordstr{\"o}m}, {\em On the
  theoretical foundation of overset grid methods for hyperbolic problems ii:
  Entropy bounded formulations for nonlinear conservation laws}, Journal of
  Computational Physics, 471 (2022), p.~111620.

\bibitem{kopriva2022theoretical}
{\sc D.~A. Kopriva, J.~Nordstr{\"o}m, and G.~J. Gassner}, {\em On the
  theoretical foundation of overset grid methods for hyperbolic problems:
  Well-posedness and conservation}, Journal of Computational Physics, 448
  (2022), p.~110732.

\bibitem{kopriva2025energy}
{\sc D.~A. Kopriva, A.~R. Winters, and J.~Nordstr{\"o}m}, {\em Energy bounds
  for discontinuous {G}alerkin spectral element approximations of well-posed
  overset grid problems for hyperbolic systems}, Journal of Computational
  Physics, 520 (2025), p.~113508.

\bibitem{kreiss2004initial}
{\sc H.-O. Kreiss and J.~Lorenz}, {\em Initial-Boundary Value Problems and the
  {N}avier--{S}tokes Equations}, SIAM, 2004.

\bibitem{lundquist2022multi}
{\sc T.~Lundquist, F.~Laur{\'e}n, and J.~Nordstr{\"o}m}, {\em A multi-domain
  summation-by-parts formulation for complex geometries}, Journal of
  Computational Physics, 463 (2022), p.~111269.

\bibitem{mattsson2017diagonal}
{\sc K.~Mattsson}, {\em Diagonal-norm upwind {SBP} operators}, Journal of
  Computational Physics, 335 (2017), pp.~283--310.

\bibitem{merrill2016spectrally}
{\sc B.~E. Merrill, Y.~T. Peet, P.~F. Fischer, and J.~W. Lottes}, {\em A
  spectrally accurate method for overlapping grid solution of incompressible
  {N}avier--{S}tokes equations}, Journal of Computational Physics, 307 (2016),
  pp.~60--93.

\bibitem{nordstrom2017roadmap}
{\sc J.~Nordstr{\"o}m}, {\em A roadmap to well posed and stable problems in
  computational physics}, Journal of Scientific Computing, 71 (2017),
  pp.~365--385.

\bibitem{nordstrom2014flexible}
{\sc J.~Nordstr{\"o}m, Q.~Abbas, B.~A. Erickson, and H.~Frenander}, {\em A
  flexible boundary procedure for hyperbolic problems: multiple penalty terms
  applied in a domain}, Communications in Computational Physics, 16 (2014),
  pp.~345--358.

\bibitem{nordstrom2021neural}
{\sc J.~Nordstr{\"o}m and O.~{\AA}lund}, {\em Neural network enhanced
  computations on coarse grids}, Journal of Computational Physics, 425 (2021),
  p.~109821.

\bibitem{rackauckas2017differentialequations}
{\sc C.~Rackauckas and Q.~Nie}, {\em {DifferentialEquations.jl} -- a performant
  and feature-rich ecosystem for solving differential equations in {J}ulia},
  Journal of Open Research Software, 5 (2017).

\bibitem{ranocha2018thesis}
{\sc H.~Ranocha}, {\em Generalised Summation-by-Parts Operators and Entropy
  Stability of Numerical Methods for Hyperbolic Balance Laws}, PhD thesis, TU
  Braunschweig, 02 2018.

\bibitem{ranocha2021summationbypartsoperators}
{\sc H.~Ranocha}, {\em {{SummationByPartsOperators}}.jl: {{A Julia}} library of
  provably stable discretization techniques with mimetic properties}, Journal
  of Open Source Software, 6 (2021), p.~3454.

\bibitem{ranocha2022optimized}
{\sc H.~Ranocha, L.~Dalcin, M.~Parsani, and D.~Ketcheson}, {\em Optimized
  {R}unge--{K}utta methods with automatic step size control for compressible
  computational fluid dynamics}, Communications on Applied Mathematics and
  Computation, 4 (2022), pp.~1191--1228.

\bibitem{ranocha2021broad}
{\sc H.~Ranocha, D.~Mitsotakis, and D.~I. Ketcheson}, {\em A broad class of
  conservative numerical methods for dispersive wave equations}, Communications
  in Computational Physics, 29 (2021), pp.~979--1029.

\bibitem{reichert2012energy}
{\sc A.~Reichert, M.~T. Heath, and D.~J. Bodony}, {\em Energy stable numerical
  methods for hyperbolic partial differential equations using overlapping
  domain decomposition}, Journal of Computational Physics, 231 (2012),
  pp.~5243--5265.

\bibitem{reichert2011stable}
{\sc A.~H. Reichert}, {\em Stable numerical methods for hyperbolic partial
  differential equations using overlapping domain decomposition, PhD Thesis},
  University of Illinois at Urbana-Champaign, 2011.

\bibitem{sharan2018time}
{\sc N.~Sharan, C.~Pantano, and D.~J. Bodony}, {\em Time-stable overset grid
  method for hyperbolic problems using summation-by-parts operators}, Journal
  of Computational Physics, 361 (2018), pp.~199--230.

\bibitem{steger1987use}
{\sc J.~L. Steger and J.~A. Benek}, {\em On the use of composite grid schemes
  in computational aerodynamics}, Computer Methods in Applied Mechanics and
  Engineering, 64 (1987), pp.~301--320.

\bibitem{svard2014review}
{\sc M.~Sv{\"a}rd and J.~Nordstr{\"o}m}, {\em Review of summation-by-parts
  schemes for initial--boundary-value problems}, Journal of Computational
  Physics, 268 (2014), pp.~17--38.

\bibitem{oversetSymposium2024}
{\sc O.~G. Symposium}, {\em Symposium on overset composite grids and solution
  technology}.
\newblock \url{https://www.oversetgridsymposium.org}, 2024.

\bibitem{thompson1998handbook}
{\sc J.~F. Thompson, B.~K. Soni, and N.~P. Weatherill}, {\em Handbook of Grid
  Generation}, CRC press, 1998.

\bibitem{toro2013riemann}
{\sc E.~F. Toro}, {\em Riemann Solvers and Numerical Methods for Fluid
  Dynamics: a Practical Introduction}, Springer Science \& Business Media,
  2013.

\bibitem{worku2025tensor}
{\sc Z.~A. Worku, J.~E. Hicken, and D.~W. Zingg}, {\em Tensor-product
  split-simplex summation-by-parts operators}, Journal of Computational
  Physics, 527 (2025), p.~113796.

\end{thebibliography}

\end{document}